\DeclareMathOperator{\colim}{Colim}
\DeclareMathOperator{\im}{Im}
\DeclareMathOperator{\coker}{Coker}
\DeclareMathOperator{\id}{Id}
\DeclareMathOperator{\ID}{ID}
\DeclareMathOperator{\Hom}{Hom}
\DeclareMathOperator{\rank}{Rank}
\newtheorem{theorem}{Theorem}[section]
\newtheorem*{theorem*}{Theorem}
\newtheorem{proposition}[theorem]{Proposition}
\newtheorem{conjecture}[theorem]{Conjecture}
\newtheorem{lemma}[theorem]{Lemma}
\theoremstyle{definition}
\newtheorem{example}[theorem]{Example}
\newtheorem{definition}[theorem]{Definition}
\newtheorem{remark}[theorem]{Remark}
\title{Injective dimension of sheaves of rational vector spaces}
\author{Danny Sugrue}
\begin{document}
\begin{abstract}
The Cantor-Bendixson rank of a topological space $X$ is a measure of the complexity of the topology of $X$. The Cantor-Bendixson rank is most interesting when the space is profinite: Hausdorff, compact and totally disconnected. We will see that the injective dimension of the Abelian category of sheaves of $\mathbb{Q}$-modules over a profinite space $X$ is determined by the Cantor-Bendixson rank of $X$.
\end{abstract}
\maketitle
\section{Introduction}
The injective dimension of an object of an Abelian category tells us the minimal number of non-zero terms of any injective resolution of the object. The injective dimension of an Abelian category is the supremum of this value ranging over all objects of the category. This value gives us a bound $n$ for which the groups of any injective resolution of the category at level $k$ are trivial for $k$ bigger than $n$. We are interested in the Abelian category of sheaves of $\mathbb{Q}$-modules over a profinite space $X$ and this paper will show that the injective dimension of this category can be completed simply by looking at the Cantor-Bendixson rank of $X$. Our main interest is in the particular case where $X$ is profinite however the results in this paper hold for more general spaces. One reason why we are especialy interested in profinite spaces is because the concept of the Cantor-Bendixson rank is especially interesting in this setting. This is because there are many interesting examples of profinite spaces which have isolated points which accumulate at limit points in interesting ways. This is best illustrated in two papers by Gartside and Smith \cite{Gartside} and \cite{Gartside1} where the profinite space given by the space of closed subgroups of a profinite group $G$ denoted $SG$ is studied for varying $G$. Furthermore if $G$ is a compact Lie group then $SG/G$ is a profinite space. The main result of the paper is given in the following theorem, see Theorems \ref{ID_CB} and \ref{ID_CB_infty}.

\begin{theorem*}
If $X$ is a space which is scattered and of finite Cantor-Bendixson rank $n$ then the injective dimension of sheaves of $\mathbb{Q}$-modules over $X$ is $n-1$. If $X$ is any space with infinite Cantor-Bendixson rank then the injective dimension is also infinite.
\end{theorem*}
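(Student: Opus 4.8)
The plan is to proceed by induction on the Cantor--Bendixson rank $n$, using the decomposition of $X$ into its open subspace $U = X\setminus X'$ of isolated points and the closed derivative $Z = X'$, which has rank $n-1$. Write $j\colon U\hookrightarrow X$ and $i\colon Z\hookrightarrow X$ for the inclusions. First I would record the standard properties of the six functors: $j^{*}$ and $i^{*}$ are exact, $i_{*}$ is exact and fully faithful, and we have the adjunctions $j_{!}\dashv j^{*}\dashv j_{*}$ and $i^{*}\dashv i_{*}\dashv i^{!}$. Since $\sheaf(X;\mathbb{Q})$ is a Grothendieck category it has enough injectives and injective dimension is computed by $\operatorname{Ext}$; exactness of $j^{*}$ and $i^{*}$ forces $j_{*}$ and $i_{*}$ to preserve injectives. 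Because $U$ is discrete and $\mathbb{Q}$ is a field, $\sheaf(U;\mathbb{Q})$ is semisimple, which gives the base case: rank $1$ (so $X$ discrete) has injective dimension $0=1-1$.

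For the upper bound I would feed two short exact sequences into dimension shifting. The first is the counit--unit sequence
\[ 0\to j_{!}j^{*}\mathcal{F}\to\mathcal{F}\to i_{*}i^{*}\mathcal{F}\to 0, \]
which reduces bounding $\operatorname{injdim}\mathcal{F}$ to bounding the outer terms. The term $i_{*}i^{*}\mathcal{F}$ has dimension at most $n-2$: the restriction $i^{*}\mathcal{F}$ lives on $Z$ of rank $n-1$, so induction bounds its dimension by $(n-1)-1$, and $i_{*}$ carries an injective resolution to one of the same length. For $j_{!}j^{*}\mathcal{F}$ I would use that for any $\mathcal{H}$ on $U$ the canonical map $j_{!}\mathcal{H}\to j_{*}\mathcal{H}$ is injective with cokernel supported on $Z$, giving
\[ 0\to j_{!}\mathcal{H}\to j_{*}\mathcal{H}\to i_{*}i^{*}j_{*}\mathcal{H}\to 0. \]
Here $j_{*}\mathcal{H}$ is injective and the cokernel again has dimension at most $n-2$ by induction, so dimension shifting yields $\operatorname{injdim} j_{!}\mathcal{H}\le\max(0,(n-2)+1)=n-1$. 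Combining the two sequences gives $\operatorname{injdim}\mathcal{F}\le n-1$ for every $\mathcal{F}$.

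For the lower bound I would again induct, producing an explicit nonzero class in $\operatorname{Ext}^{n-1}$. The seed $n=2$ is clean: with $\mathcal{H}=\mathbb{Q}_{U}$ the sequence above is a non-split extension, non-split because $\Hom_{X}(i_{*}i^{*}j_{*}\mathbb{Q}_{U},\,j_{*}\mathbb{Q}_{U})\cong\Hom_{Z}(i^{*}j_{*}\mathbb{Q}_{U},\,i^{!}j_{*}\mathbb{Q}_{U})=0$, since $i^{!}j_{*}=0$; this forces $\operatorname{Ext}^{1}\neq 0$. For the inductive step, the long exact $\operatorname{Ext}$ sequence of the second short exact sequence together with injectivity of $j_{*}\mathcal{H}$ gives, for $k\ge 1$, an isomorphism $\operatorname{Ext}^{k+1}_{X}(\,\cdot\,,j_{!}\mathcal{H})\cong\operatorname{Ext}^{k}_{X}(\,\cdot\,,i_{*}i^{*}j_{*}\mathcal{H})$; evaluating on a sheaf pushed forward from $Z$ converts the right-hand side into $\operatorname{Ext}^{k}_{Z}$ by full faithfulness of $i_{*}$, and the rank-$(n-1)$ hypothesis supplies a nonzero class in degree $k=n-2$. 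I expect the main obstacle to be exactly this last matching: one must choose $\mathcal{H}$ on $U$ so that $i^{*}j_{*}\mathcal{H}$ realises the witnessing sheaf on $Z$ without killing the relevant $\operatorname{Ext}$. This is governed by how the isolated points of $X$ accumulate onto the next stratum $Z\setminus Z'$ — every point there is a limit of isolated points of $X$, so the boundary-value functor $i^{*}j_{*}$ is nonzero on that stratum — but verifying that the transported class survives rather than mapping to zero is the delicate heart of the argument.

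Finally, for $X$ of infinite rank I would argue by monotonicity: for any closed inclusion $i\colon Y\hookrightarrow X$ the exact, fully faithful, injective-preserving functor $i_{*}$ gives $\operatorname{Ext}^{k}_{X}(i_{*}\mathcal{A},i_{*}\mathcal{B})\cong\operatorname{Ext}^{k}_{Y}(\mathcal{A},\mathcal{B})$, whence $\operatorname{injdim}\sheaf(X;\mathbb{Q})\ge\operatorname{injdim}\sheaf(Y;\mathbb{Q})$. It then suffices to exhibit closed subspaces of every finite rank. If $X$ is scattered, each $X^{(n-1)}$ is nonempty and scattered, hence has a point $p$ isolated in it; a clopen neighbourhood $O\ni p$ with $O\cap X^{(n-1)}=\{p\}$ satisfies $O^{(k)}=O\cap X^{(k)}$, so $O$ has rank exactly $n$ and contributes $n-1$. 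If instead the Cantor--Bendixson process stabilises at a nonempty perfect kernel, that kernel contains a Cantor set and therefore closed copies of the ordinal spaces $\omega^{\,n-1}+1$ of every finite rank $n$. Either way $\operatorname{injdim}\sheaf(X;\mathbb{Q})\ge n-1$ for all $n$, so it is infinite, completing the proof.
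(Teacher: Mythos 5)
Your recollement strategy is genuinely different from the paper's, and its upper-bound half is correct and arguably cleaner: the paper instead builds the Godement resolution out of serrations and proves (Lemma \ref{lem0}) that its $k$-th term has vanishing stalks off $X^{(k)}$, which gives the bound $n-1$ with no induction at all. The genuine gap is in your lower bound. Your seed case $n=2$ is fine, but in the inductive step your dimension shift gives
\begin{align*}
\operatorname{Ext}^{n-1}_X\left(i_*\mathcal{A},\,j_!\mathcal{H}\right)\cong \operatorname{Ext}^{n-2}_Z\left(\mathcal{A},\,i^*j_*\mathcal{H}\right),
\end{align*}
so to invoke the inductive hypothesis you need the nonvanishing witness on $Z$ to be a sheaf of the special form $i^*j_*\mathcal{H}$, a boundary-value sheaf of a sheaf living on the isolated points. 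The bare inductive statement ``sheaves on a scattered space of rank $n-1$ have injective dimension $n-2$'' only produces \emph{some} witnessing pair $(\mathcal{A},\mathcal{B})$ and gives no control over whether $\mathcal{B}$ can be realised this way, nor over whether a transported class survives. You name this as ``the delicate heart'' but do not supply the argument, and it is exactly where the content of the theorem lies. The paper resolves precisely this point by running the whole argument on one explicit object, the constant sheaf $c\mathbb{Q}$: it identifies $\Hom(\iota_x(\mathbb{Q}),C^k(F))$ with ${\coker{\delta_{k-1}}}_x$ and the induced maps with $a\mapsto (a,\underline{0})_x$ (Lemma \ref{Homologychar}), shows the stalks ${\coker{\delta_{k-1}}}_x$ are nonzero by exhibiting germs of families not hit by the serration map (Lemma \ref{godementnon0}), and then shows the last map $\alpha_{n-1}$ is not surjective by the same kind of element chase (Theorem \ref{ID_CB}). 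In effect the paper strengthens the induction to track a specific family of sheaves that is closed under your operation $\mathcal{H}\mapsto i^*j_*\mathcal{H}$; to make your formalism close up, you would have to do the same, and that verification reduces to germ computations comparable to the paper's.

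There are also two problems in your infinite-rank argument, one cosmetic and one false as stated. First, in an arbitrary space (and the statement is for any space) a point of finite height need not admit a \emph{clopen} neighbourhood meeting $X^{(n-1)}$ in a single point; but you do not need closedness, since monotonicity also holds for open subspaces: $j_!$ is exact and $j^*$ preserves injectives, so $\operatorname{Ext}^k_U(\mathcal{A},\mathcal{B})\cong \operatorname{Ext}^k_X(j_!\mathcal{A},j_*\mathcal{B})$. Second, the claim that a nonempty perfect kernel ``contains a Cantor set and therefore closed copies of $\omega^{n-1}+1$'' is false in general: $\beta\mathbb{N}\setminus\mathbb{N}$ is compact, Hausdorff, totally disconnected (so even profinite) and perfect, yet contains no nontrivial convergent sequence, hence no copy of $\omega+1$ and no Cantor set. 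Fortunately that case is unnecessary: infinite Cantor--Bendixson rank forces $X^{(n+1)}\subsetneq X^{(n)}$ for every finite $n$ (once two consecutive derived sets coincide the process has stabilised), so points of every finite height exist and your open-neighbourhood construction already produces scattered open subspaces of every finite rank, whether or not the perfect kernel is empty. This is exactly the shape of the paper's Theorem \ref{ID_CB_infty}, which picks points $x_n$ of height $n$ and quotes the finite-rank nonvanishing --- so both your route and the paper's reduce the infinite case to the finite lower bound, which is the part your proposal leaves open.
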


If $X$ has finite Cantor-Bendixson rank and non-empty perfect hull then certain difficulties arise and these are discussed after the main theorem. This discussion leads to Conjecture \ref{Conject} which states that in the remaining case the injective dimension is infinite and this is left for future work. The results presented in this paper apply to any semisimple ring $R$ since every $R$-module is injective (this is the key fact we use about $\mathbb{Q}$-modules). In \cite{SugrueT} the results of this paper are extended to rational $G$-equivariant sheaves, for $G$ a profinite group. This allows us to calculate the injective dimension of an algebraic model for rational $G$-spectra.

The first section of the paper will introduce the concept of the Cantor-Bendixson rank of a Hausdorff space and look at some useful applications of this concept. For the second section we will set up the injective resolutions that we will be using in the final calculations. The final section works towards proving the main theorem and discussing the conjecture dealing with the remaining case.

\section{Cantor-Bendixson Rank}
Given a profinite space $X$, the aim of this paper is to calculate the injective dimension of the category of sheaves of $\mathbb{Q}$-modules over $X$ in terms of the Cantor-Bendixson rank of $X$ denoted by ${\rank}_{CB}(X)$. We begin by defining and stating known properties of the Cantor-Bendixson rank which are known in \cite{Gartside1} and \cite{Gartside}. Recall that an isolated point of a topological space $X$ is a point $x$ which satisfies that $\left\lbrace x\right\rbrace$ is open in $X$.
\begin{definition}
For a topological space $X$ we can define the \textbf{Cantor-Bendixson process} on $X$ where $X^{\prime}$ is the set of all isolated points of $X$:
\begin{enumerate}
\item Let $X^{(0)}=X$ and $X^{(1)}=X\setminus X^{\prime}$.
\item For successor ordinals suppose we have $X^{(\alpha)}$ for an ordinal $\alpha$, we define $X^{(\alpha+1)}=X^{(\alpha)}\setminus {X^{(\alpha)}}^{\prime}$.
\item If $\lambda$ is a limit ordinal we define $X^{(\lambda)}=\underset{\alpha<\lambda}{\colim}X^{(\alpha)}$.
\end{enumerate}
\end{definition}
Every Hausdorff topological space $X$ has a minimal ordinal $\alpha$ such that $X^{(\alpha)}=X^{(\lambda)}$ for all $\lambda\geq \alpha$, see Gartside and Smith \cite[Lemma 2.7]{Gartside}.
\begin{definition}
Let $X$ be a Hausdorff topological space. Then we define the \textbf{Cantor-Bendixson rank} of $X$ denoted $\rank_{CB}(X)$ to be the minimal ordinal $\alpha$ such that $X^{(\alpha)}=X^{(\lambda)}$ for all $\lambda\geq \alpha$.
\end{definition}
There are two ways that the Cantor-Bendixson process can stabilise. The first way is where the process stabilises to give the empty set and the second is where it stabilises at a non-empty space which we call the perfect hull.
\begin{definition}
A compact Hausdorff space $X$ of Cantor-Bendixson rank $\alpha$ is called \textbf{scattered} if the space $X^{(\alpha)}$ obtained by the definition above is equal to the empty set. A topological space $X$ is called \textbf{perfect} if it has no isolated points.
\end{definition}
\begin{remark}
If $X$ is perfect or $X\neq \emptyset$ then $\rank_{CB}(X)=0$.
\end{remark}
\begin{example}\label{padic}
Consider $P=\left\lbrace \frac{1}{n}\mid n\in\mathbb{N}\right\rbrace\bigcup\left\lbrace 0\right\rbrace$ with the subspace topology of $\mathbb{R}$. For any prime number $p$, the $p$-adic integers $\mathbb{Z}_p$ satisfy that $S(\mathbb{Z}_p)$ is homeomorphic to $P$. We know that $\rank_{CB}(P)=1$.
\end{example}
We will see more interesting examples after Proposition \ref{prodcant}. With this definition in mind we can understand the following theorem:
\begin{theorem}[Cantor-Bendixson Theorem]\label{CantThm}
Given a countably based Hausdorff topological space $X$, we can write $X$ as a disjoint union of a scattered subspace and a perfect subspace called the perfect hull of $X$. 
\end{theorem}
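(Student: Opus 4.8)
The plan is to run the Cantor-Bendixson process on $X$ itself and to read the decomposition off the place where it stabilises. By the stabilisation result cited just above (Gartside and Smith, Lemma 2.7) there is a minimal ordinal $\alpha=\rank_{CB}(X)$ with $X^{(\alpha)}=X^{(\lambda)}$ for all $\lambda\geq\alpha$. I would define the perfect hull to be $P:=X^{(\alpha)}$ and the scattered part to be its complement $X\setminus P$. Before anything else I would record that each stage is closed: the isolated points of any subspace form an open subset, so $X^{(\beta+1)}=X^{(\beta)}\setminus(X^{(\beta)})'$ is closed in $X^{(\beta)}$ whenever $X^{(\beta)}$ is closed, and intersections take care of the limit stages. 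Hence $P$ is closed in $X$ and $X\setminus P$ is open.

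The perfect half of the statement is then immediate from stabilisation. Since $X^{(\alpha)}=X^{(\alpha+1)}=X^{(\alpha)}\setminus(X^{(\alpha)})'$, the set of isolated points $(X^{(\alpha)})'$ must be empty, so $P=X^{(\alpha)}$ has no isolated points and is therefore perfect, as required.

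For the harder half I must show that $Y:=X\setminus P$ is scattered, that is, that the Cantor-Bendixson process run on the subspace $Y$ reaches the empty set. The key step is a transfinite induction establishing the inclusion $Y^{(\beta)}\subseteq X^{(\beta)}$ for every ordinal $\beta$. The base case is the definition of $Y$, and the limit case is clear because both sides are formed by intersection. At a successor I would argue contrapositively: if $y\in Y^{(\beta)}$ were isolated in $X^{(\beta)}$, witnessed by an open set $U$ with $U\cap X^{(\beta)}=\{y\}$, then $U\cap Y^{(\beta)}\subseteq U\cap X^{(\beta)}=\{y\}$ by the inductive inclusion, forcing $U\cap Y^{(\beta)}=\{y\}$ and so making $y$ already isolated in $Y^{(\beta)}$. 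Thus any point surviving into $Y^{(\beta+1)}$ also survives into $X^{(\beta+1)}$, which gives $Y^{(\beta+1)}\subseteq X^{(\beta+1)}$. Applying the inclusion at stage $\alpha$ yields $Y^{(\alpha)}\subseteq X^{(\alpha)}\cap Y=P\cap(X\setminus P)=\emptyset$, so the process on $Y$ terminates at the empty set and $Y$ is scattered.

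I would close by noting that $X=P\sqcup(X\setminus P)$ is a disjoint union of sets and need not be a topological coproduct, since $X\setminus P$ can fail to be closed when scattered points accumulate onto the perfect hull. I expect the comparison of the two Cantor-Bendixson processes, namely the inclusion $Y^{(\beta)}\subseteq X^{(\beta)}$, to be the main obstacle: it is precisely where one must rule out that deleting $P$ manufactures new isolated points at a later stage than in the ambient process. The Hausdorff hypothesis enters only through the cited stabilisation lemma, and the countably based hypothesis serves to guarantee that $\alpha$ is a countable ordinal; neither is otherwise essential to the set-theoretic decomposition itself.
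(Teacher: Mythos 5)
Your proof is correct, but it takes a genuinely different route from the paper, whose entire proof of Theorem \ref{CantThm} is a deferral to \cite[Lemma 2.7]{Gartside} and the paragraph following it. You instead give a self-contained argument from the one fact the paper does quote, namely stabilisation of the Cantor-Bendixson process at a minimal ordinal $\alpha$. Your decomposition $X=X^{(\alpha)}\sqcup(X\setminus X^{(\alpha)})$ is the right one; perfectness of $P=X^{(\alpha)}$ is immediate from $X^{(\alpha)}=X^{(\alpha+1)}$ exactly as you say; and the real content, the transfinite induction giving $Y^{(\beta)}\subseteq X^{(\beta)}$ for $Y=X\setminus P$, with the contrapositive argument at successor stages ruling out that deleting $P$ manufactures new isolated points, is precisely the step a reader cannot extract from the paper itself. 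What the paper's citation buys is brevity and whatever additional facts the reference records (for instance countability of the scattered part for countably based spaces, which neither the statement nor your argument addresses); what your version buys is independence from the reference apart from the stabilisation lemma, and it makes transparent your correct observation that the Hausdorff and countably based hypotheses play no role in the set-theoretic decomposition itself. Your closing caveat, that the decomposition need not be a topological coproduct, is not only correct but is witnessed by the paper's own space $B$ of Definition \ref{Pel}: $B$ is compact, Hausdorff and countably based, yet its isolated midpoints accumulate onto the perfect hull $F$, so the scattered part is open but not closed. Be aware that this contradicts the remark the paper places directly after the theorem (which claims countable basedness forces a closed-open decomposition); the discrepancy lies in that remark, not in your proof.
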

Therefore under the assumptions of the above theorem we have that $X=X_S\coprod X_H$ where $X_S$ is the scattered part of $X$ and $X_H$ is the perfect hull.

\begin{proof}
Follows from \cite[Lemma 2.7]{Gartside} and the subsequent paragraph.
\end{proof}
\begin{remark}
In general if $X$ is a Hausdorff space then $X_H$ is always closed and $X_S$ is always open. The assumption that $X$ is countably based in the above theorem allow us to prove that $X_H$ and $X_S$ are a closed-open decomposition of $X$. Without this assumption $X_S$ does not have to be closed.
\end{remark}
If we have a space $X$ with $x\in X_S$ we define the height of $x$ denoted $\text{ht}(X,x)$ to be the ordinal $\kappa$ such that $x\in X^{(\kappa)}$ but $x\notin X^{(\kappa+1)}$. We sometimes denote this by $\rank_{CB}(x)$.

\begin{proposition}\label{prodcant}
Let $X$ be a space with $\rank_{CB}(X)=n+1$ and $Y$ be a space with $\rank_{CB}(Y)=m+1$, where $m,n\in\mathbb{N}$. Then $\rank_{CB}(X\prod Y)=m+n+1=\rank_{CB}(X)+\rank_{CB}(Y)-1$.
\end{proposition}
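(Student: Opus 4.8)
The plan is to reduce everything to a ``Leibniz rule'' for the Cantor-Bendixson derivative, namely the identity
$$(X\times Y)^{(k)}=\bigcup_{i+j=k}X^{(i)}\times Y^{(j)}\qquad\text{for every finite }k,$$
which I would prove by induction on $k$. Since $X$ and $Y$ have finite rank, no limit-ordinal stages of the process intervene, so only the successor step is needed, and the product is again compact Hausdorff so its rank is defined. The base cases $k=0,1$ are immediate: one has $X^{(0)}\times Y^{(0)}=X\times Y$, and for $k=1$ a point $(x,y)$ is isolated in $X\times Y$ if and only if $\{x\}$ is open in $X$ and $\{y\}$ is open in $Y$ (basic opens of the product are boxes $U\times V$), so deleting the isolated points leaves exactly $(X^{(1)}\times Y)\cup(X\times Y^{(1)})$.

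For the inductive step, write $Z_k:=\bigcup_{i+j=k}X^{(i)}\times Y^{(j)}$ and assume $(X\times Y)^{(k)}=Z_k$; the task is to identify the isolated points of $Z_k$ and check that removing them gives $Z_{k+1}$. I would organise this through the invariant $\sigma_X(x):=\sup\{i:x\in X^{(i)}\}$ (finite and equal to $\operatorname{ht}(X,x)$ off the perfect hull $X_H$, and $\infty$ on it), using the elementary fact, immediate from the nesting $X^{(i+1)}\subseteq X^{(i)}$, that $(x,y)\in Z_k$ if and only if $\sigma_X(x)+\sigma_Y(y)\ge k$. Granting this, $Z_k\setminus Z_{k+1}=\{(x,y):\sigma_X(x)+\sigma_Y(y)=k\}$, and since $Z_{k+1}\subseteq Z_k$ it suffices to show these are precisely the isolated points of $Z_k$.

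This isolated-point characterisation is the heart of the argument and the step I expect to be the main obstacle. To show a point with $\sigma_X(x)+\sigma_Y(y)=k$ — so both heights $a=\operatorname{ht}(X,x)$ and $b=\operatorname{ht}(Y,y)$ are finite with $a+b=k$ — is isolated in $Z_k$, I would pick opens $U\ni x$ and $V\ni y$ with $U\cap X^{(a)}=\{x\}$ and $V\cap Y^{(b)}=\{y\}$ (available since $x,y$ are isolated in $X^{(a)},Y^{(b)}$), and then analyse any second point $(x',y')\in(U\times V)\cap Z_k$ according to whether $\operatorname{ht}(X,x')\ge a$. If $\operatorname{ht}(X,x')\ge a$ then $x'\in U\cap X^{(a)}=\{x\}$, so $x'=x$, whence $\operatorname{ht}(Y,y')\ge k-a=b$ and likewise $y'=y$; if instead $\operatorname{ht}(X,x')<a$ then $\operatorname{ht}(Y,y')>b$, forcing $y'\in V\cap Y^{(b)}=\{y\}$ and the contradiction $\operatorname{ht}(Y,y')=b$. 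Hence $(U\times V)\cap Z_k=\{(x,y)\}$. For the converse, a point with $\sigma_X(x)+\sigma_Y(y)\ge k+1$ is a limit of other points of $Z_k$: since the sum is at least $1$, some coordinate has positive height or lies in a perfect hull; if say $a\ge 1$ is finite, then $x$ is non-isolated in $X^{(a-1)}$, so every $U\ni x$ contains some $x'\ne x$ in $X^{(a-1)}$ and $(x',y)\in Z_k$ because $(a-1)+b\ge k$, while the case $\sigma_X(x)=\infty$ is even easier since $x$ is a limit of points of the perfect set $X_H$, all of which lie in $Z_k$.

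Finally I would read off the rank from the Leibniz rule. In the scattered case $X^{(n+1)}=Y^{(m+1)}=\emptyset$, so $(X\times Y)^{(k)}=\bigcup_{i+j=k,\,i\le n,\,j\le m}X^{(i)}\times Y^{(j)}$, which is nonempty at $k=n+m$ (only the surviving box $X^{(n)}\times Y^{(m)}$) and empty at $k=n+m+1$, giving $\rank_{CB}(X\times Y)=m+n+1$. When the perfect hulls are nonempty, the same identity shows that for $k\ge n+m+1$ the derivative stabilises to $(X\times Y_H)\cup(X_H\times Y)$, whereas at $k=n+m$ it additionally contains $X^{(n)}\times Y^{(m)}$, and this box has points of genuine height $(n,m)$ — which exist precisely because $\rank_{CB}(X)=n+1$ and $\rank_{CB}(Y)=m+1$ force $X^{(n)}\ne X^{(n+1)}$ and $Y^{(m)}\ne Y^{(m+1)}$ — that lie in neither $X\times Y_H$ nor $X_H\times Y$. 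Thus the derivative strictly drops at step $n+m+1$ and stabilises there, so again $\rank_{CB}(X\times Y)=m+n+1=\rank_{CB}(X)+\rank_{CB}(Y)-1$.
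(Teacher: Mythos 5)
Your proof is correct, and at bottom it runs on the same engine as the paper's: induction on the Cantor--Bendixson derivative of $X\times Y$, with the key point being that the isolated points of the $k$-th derivative are exactly the pairs whose heights sum to $k$ (the paper's Claim that these form $\coprod_{p+q=k}X_p\prod Y_q$ is precisely your $Z_k\setminus Z_{k+1}$, and the paper also records your Leibniz identity, in complemented form, right after its Claim). The difference is in execution, and your version is tighter at exactly the two places where the paper is loosest. First, your two-sided characterisation of the isolated points of $Z_k$ via the invariant $\sigma_X(x)+\sigma_Y(y)$ --- shrinking a box neighbourhood against $U\cap X^{(a)}=\{x\}$ and $V\cap Y^{(b)}=\{y\}$ and splitting on whether $\operatorname{ht}(X,x')\ge a$ --- is a complete argument, whereas the paper's inductive step proceeds by an even/odd case split on $i$ in which the parity plays no genuine role (the possibilities $X_{p+1}\prod Y_q$ and $X_p\prod Y_{q+1}$ with $p+q=i$ already exhaust all pairs summing to $i+1$), and it asserts rather than proves where an isolated point of $(X\prod Y)^{(i+1)}$ must lie. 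Second, by allowing $\sigma=\infty$ on the perfect hulls you handle the scattered and non-scattered cases uniformly: you identify the stabilised derivative concretely as $(X\times Y_H)\cup(X_H\times Y)$ and verify strict descent at stage $n+m$ using minimality of the two ranks (which forces $X^{(n)}\neq X^{(n+1)}$ and $Y^{(m)}\neq Y^{(m+1)}$), where the paper only remarks that the non-scattered case ``is similar'' and that the process ends at the hull. So the decomposition is the same, but your formulation as the single identity $(X\times Y)^{(k)}=\bigcup_{i+j=k}X^{(i)}\times Y^{(j)}$ with height-sum bookkeeping is both more rigorous and more general, covering in one pass the case the paper leaves informal.
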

\begin{proof}
We first prove this in the case where both $X$ and $Y$ are scattered. Let $X_k$ denote the set of isolated points in $X^{(k)}$, and $Y_k$ denote the set of isolated points in $Y^{(k)}$. First note that the isolated points of $\left(X\prod Y\right)$ are given by $X_0\prod Y_0$, and so:
\begin{align*}
\left(X\prod Y\right)^{(1)}=\left(X\prod Y\right)\setminus \left(X_0\prod Y_0\right).
\end{align*}
The isolated points of $\left(X\prod Y\right)^{(1)}$ are equal to the set
\begin{align*}
\left(X_0\prod Y_1\right)\coprod\left(X_1\prod Y_0\right)
\end{align*}
and it therefore follows that:
\begin{align*}
\left(X\prod Y\right)^{(2)}=\left(X\prod Y\right)\setminus \left[(X_0\prod Y_0)\coprod\left(X_0\prod Y_1\right)\coprod \left(X_1\prod Y_0\right)\right].
\end{align*}
This is true since for example if $(x,y)$ were such that $x$ had height greater than or equal to $2$ and $y$ were isolated then the points of the form $(x^{\prime},y)$ would converge to $(x,y)$, where $x^{\prime}$ has height between $1$ and the height of $x$.

\textbf{Claim:} The isolated points in $\left(X\prod Y\right)^{(i)}$ are of the form
\begin{align*}
\underset{p+q=i}{\coprod}\left(X_p\prod Y_q\right)
\end{align*}
where $0\leq p\leq n$ and $0\leq q \leq m$.

We have shown this holds for $i=0$ and $i=1$ so let the above claim be our inductive hypothesis, and suppose it holds for $i$ and that $\gamma$ is an isolated point of $\left(X\prod Y\right)^{(i+1)}$. 

Then if $i$ is even and hence $i+1$ is odd, since all of the points accumulating at $\gamma$ were eliminated in the previous stage of the Cantor-Bendixson process, and by our hypothesis each of these accumulation points which were isolated in $\left(X\prod Y\right)^{(i)}$ belong to some $X_p\prod Y_q$ where $p+q=i$, so $\gamma$ must belong to $X_{p+1}\prod Y_q$ or $X_p \prod Y_{q+1}$. 

In the case where $i$ is odd and $i+1$ is even we have the same possibilities plus the additional possibility where $\gamma$ is in $X_{\frac{i+1}{2}}\prod Y_{\frac{i+1}{2}}$. Therefore we have shown by induction that the isolated points are of the form $\underset{p+q=i+1}{\coprod}\left(X_p\prod Y_q\right)$.

From this we can see that:
\begin{align*}
\left(X\prod Y\right)^{(i)}=\left(X\prod Y\right)\setminus \left[\underset{0\leq k\leq i-1}{\coprod}\left(\underset{p+q=k}{\coprod}X_p\prod Y_q\right)\right]. 
\end{align*}
We then have:
\begin{align*}
\left(X\prod Y\right)^{(n+m-1)}&=\left(X_n\prod Y_{m-1}\right)\coprod \left(X_{n-1}\prod Y_m\right)\\
\left(X\prod Y\right)^{(n+m)}&=\left(X_n\prod Y_m\right)\\
\left(X\prod Y\right)^{(n+m+1)}&=\emptyset.
\end{align*}
This proves that 
\begin{align*}
\rank_{CB}(X\prod Y)=m+n+1=\rank_{CB}(X)+\rank_{CB}(Y)-1. 
\end{align*}
The case where atleast one of $X$ and $Y$ are non-scattered is similar except we observe that we end up with
\begin{align*}
\left(X\prod Y\right)^{(n+m+1)}=\left(X\prod Y\right)_H
\end{align*}
which may not be empty. 
\end{proof}
We now have the following two examples of Cantor-Bendixson rank calculations. 
\begin{example}
From \cite[Proposition 2.5]{Gartside1} we know that if $q_1,q_2,\ldots,q_n$ are a finite collection of distinct primes then there is an isomorphism:
\begin{align*}
S\left(\underset{1\leq i\leq n}{\prod}\mathbb{Z}_{q_i}\right)\cong \underset{1\leq i\leq n}{\prod}S\left(\mathbb{Z}_{q_i}\right)
\end{align*}
By Example \ref{padic} there is a homeomorphism of spaces:
\begin{align*}
S\left(\underset{1\leq i\leq n}{\prod}\mathbb{Z}_{q_i}\right)\cong P^n
\end{align*}
An application of Proposition \ref{prodcant} shows that:
\begin{align*}
\rank_{CB}\left(S\left(\underset{1\leq i\leq n}{\prod}\mathbb{Z}_{q_i}\right)\right)=n+1.
\end{align*}
\end{example}
\begin{example}\label{Exinf}
The space $\underset{n\in\mathbb{N}}{\coprod}{P^n}$ gives an example of a space which has infinite Cantor-Bendixson rank. This is because we can set $x_n$ to be the point in $P^n$ with height equal to $n+1$ and we therefore have a sequence of points with unbounded height. Notice that this space is not profinite since it is not compact.
\end{example}
\section{Injective Resolutions of Sheaves}
In this section we construct an injective resolution of sheaves of $\mathbb{Q}$-modules over a space $X$. We do this by defining the Godement resolution of a sheaf and outlining why this is injective. If $F$ is a sheaf of $\mathbb{Q}$-modules over a space $X$ we let $LF$ denote the sheaf space corresponding to $F$ and $\pi$ denote the local homeomorphism from $LF$ to $X$.
\begin{definition}\label{serr}
Let $F$ be a sheaf of $\mathbb{Q}$-modules over a topological space $X$. Then we define the sheaf $C^0(F)$ on the open sets $U$ by taking $C^0(F)(U)$ to be the collection of serrations, i.e. the set of not necessarily continuous functions $\left\lbrace f:U\rightarrow LF\,\mid \pi\circ f=\id\right\rbrace$ which equates to $\underset{x \in U}{\prod} F_x$.
\end{definition}
Note that every section is a serration so we have a natural inclusion $\delta_0:F\rightarrow C^0(F)$ which is a monomorphism.
\begin{remark}\label{sermapdef}
The map from a sheaf $F$ into $C^0(F)$ is given as follows:
\begin{align*}
F(U)&\rightarrow \underset{y\in U}{\prod}F_y\rightarrow \underset{V\backepsilon \,x}{\colim}\underset{y\in V}{\prod}F_y\\s&\mapsto (s_y)_{y\in U}\mapsto \left((s_y)_{y\in U}\right)_x
\end{align*}
where $U$ is an open neighbourhood of a point $x\in X$ and $\left(-\right)_x$ is the germ at $x$. This induces a map ${\delta_0}_x$ on stalks as follows:
\begin{align*}
F_x&\rightarrow \underset{V\backepsilon x}{\colim}\underset{y\in V}{\prod}F_y\\s_x&\mapsto \left((s_y)_{y\in U}\right)_x 
\end{align*}
We call ${\delta_0}_x$ the serration map and denote it by $S$ throughout to simplify notation.
\end{remark}
Notice that if a map $f$ belongs to the set of serrations in Definition \ref{serr} then $f$ does not have to be continuous. We can now define the Godemont resolution using Definition \ref{serr} and \cite[Page 36 - 37]{Bredon}.
\begin{definition}\label{Godement}
Let $F$ be a sheaf of $\mathbb{Q}$-modules over a topological space $X$. Then as in Definition \ref{serr} we have $C^0(F)$ and a monomorphism $\delta_0:F\rightarrow C^0(F)$.

Consider $\coker\delta_0$, if we replace $F$ in the construction above with $\coker{\delta_0}$ and set $C^1(F)= C^0(\coker{\delta_0})$ from Definition \ref{serr} we will get the following diagram:
\begin{center}
$\xymatrix{0\ar[r]&F\ar[r]^{\delta_0}&C^0(F)\ar[d]\ar[r]^{\delta_1}&C^1(F)\\
&&\coker{\delta_0}\ar[ur]_{{\delta_1}^{\prime}}}$
\end{center}
where ${\delta_1}^{\prime}$ is the monomorphism from $\coker\delta_0$ into $C^1(F)$. We can then continue to build the resolution inductively using this idea. This resolution which we have constructed is called the \textbf{Godement resolution}.
\end{definition}
\begin{remark}\label{skyprod}
Each $C^0(F)$ can be written as $\underset{y\in X}{\prod}\iota_y(F_y)$. To see this if $U\subseteq X$ is open then $C^0(F)(U)=\underset{y\in U}{\prod}F_y$. On the other hand:
\begin{align*}
\left(\underset{y\in X}{\prod}\iota_y(F_y)\right)(U)&=\underset{y\in X}{\prod}\left(\iota_y(F_y)(U)\right)=\underset{y\in U}{\prod}F_y
\end{align*}
\end{remark}
The following lemma relates the Cantor-Bendixson process to the Godement resolution. It shows that the $k^{\text{th}}$ term of the Godement resolution is concentrated over $X^{(k)}$. This will ultimately provide an upper bound for the injective dimension of sheaves.
\begin{lemma}\label{lem0}
Let $X$ be a topological space and $F$ be a sheaf of $\mathbb{Q}$-modules over $X$. Then for every $k\in\mathbb{N}$ we have that $C^k(F)_x=0$ for every $x \in X\setminus X^{(k)}$.
\end{lemma}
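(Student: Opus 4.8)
The plan is to reorganise the Godement resolution around its sequence of cokernel sheaves. Writing $Z^0 = F$ and $Z^{k+1} = \coker(\delta_k)$, the inductive construction in Definition \ref{Godement} gives $C^k(F) = C^0(Z^k)$, where $Z^k$ embeds into $C^0(Z^k)$ via the serration inclusion $\delta_0$. The lemma will then follow from the stronger claim that each $Z^k$ is \emph{concentrated on} $X^{(k)}$, meaning $(Z^k)_y = 0$ for every $y \in X \setminus X^{(k)}$. I would prove this claim by induction on $k$, the base case $Z^0 = F$ on $X^{(0)} = X$ being vacuous.

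Before the induction I would isolate two auxiliary facts about the functor $C^0$, both exploiting the description $C^0(G)_x = \colim_{V \ni x}\prod_{y \in V} G_y$ from Remark \ref{sermapdef}. First, if a sheaf $G$ satisfies $G_y = 0$ for all $y$ outside a \emph{closed} set $A$, then $C^0(G)_x = 0$ for every $x \notin A$: since $X \setminus A$ is an open neighbourhood of $x$ on which every stalk of $G$ vanishes, the neighbourhoods contained in $X \setminus A$ are cofinal in the colimit and contribute only zero products. Second, if in addition $y$ is isolated in $A$, that is, there is an open $U$ with $U \cap A = \{y\}$, then the serration map $S \colon G_y \to C^0(G)_y$ is an isomorphism: restricting to neighbourhoods $V \subseteq U$, every factor of $\prod_{z \in V} G_z$ except the one at $y$ vanishes, so $C^0(G)_y \cong G_y$, and one checks directly that $S$ becomes the identity under this identification.

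With these in hand the inductive step splits according to the height of $y$, using that $X^{(k)}$ is closed in $X$ for every finite $k$ (the isolated points of $X^{(k)}$ form an open subset, so passing from $X^{(k)}$ to $X^{(k+1)}$ removes an open set). Assuming $Z^k$ is concentrated on the closed set $X^{(k)}$, I compute $(Z^{k+1})_y = \coker\bigl(S \colon (Z^k)_y \to C^0(Z^k)_y\bigr)$, using that stalks are exact and hence commute with cokernels. If $y \notin X^{(k)}$ then both source and target vanish by the first auxiliary fact, and if $y \in X^{(k)} \setminus X^{(k+1)}$ then $y$ is isolated in $X^{(k)}$ and $S$ is an isomorphism by the second, so in either case $(Z^{k+1})_y = 0$; as these two cases exhaust $X \setminus X^{(k+1)}$, the claim holds for $k+1$. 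Finally, applying the first auxiliary fact to $G = Z^k$ and $A = X^{(k)}$ yields $C^k(F)_x = C^0(Z^k)_x = 0$ for $x \notin X^{(k)}$, which is the lemma.

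The main obstacle I anticipate is the interaction between stalks and the infinite products defining $C^0$. Stalks do not commute with infinite products in general, so the vanishing of $C^0(G)_x$ cannot be read off factor by factor; it is essential that the nonzero stalks are confined to a \emph{closed} set, so that $x$ has an entire neighbourhood avoiding them and the colimit over such neighbourhoods collapses. Keeping track of this closedness, together with the local description of points isolated in $X^{(k)}$, is the delicate point; by comparison the algebra (exactness of stalks, cokernels of isomorphisms) is routine.
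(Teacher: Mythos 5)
Your proposal is correct and follows essentially the same route as the paper: an induction on $k$ whose hypothesis is precisely that the cokernel sheaf $\coker\delta_{k-1}$ has vanishing stalks off $X^{(k)}$, combined with the two facts that the serration map is a stalk isomorphism at points isolated in $X^{(k)}$ and that $C^0(G)_x$ vanishes whenever the stalks of $G$ vanish on an entire neighbourhood of $x$. Your packaging --- isolating these as explicit properties of the functor $C^0$ and invoking the closedness of $X^{(k)}$ --- is a tidier and more careful rendering of the paper's case analysis (which phrases the same points in terms of accumulating points having smaller scattered height), but the underlying argument is the same.
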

\begin{proof}
We prove this using mathematical induction. For $k=0$ we will start by calculating $C^0(F)_x$ when $x$ is isolated. By definition we have: 
\begin{align*}
{C^0(F)}_x=\underset{U\backepsilon\, x}{\colim}\underset{y\in U}{\prod} F_y
\end{align*}
where $U$ ranges across all neighbourhoods of $x$. Since $x$ is isolated it is clear that $\left\lbrace x\right\rbrace$ is the minimal neighbourhood of $x$, so we have that $C^0(F)_x=F_x$ as well as the fact that the monomorphism ${\delta_0}_x$ is an isomorphism. In particular this says that $\coker{\delta_0}_x=0$. It therefore follows that $C^1(F)_x=0$ since 
\begin{align*}
C^1(F)_x=C^0(\coker{\delta_0})_x={\coker{\delta_0}}_x=0
\end{align*}
Suppose ${\coker\delta_{k-1}}_x=0$ and hence $C^k(F)_x=0$ on $X\setminus X^{(k)}$, and take any $x \in X\setminus X^{(k+1)}$ for some $k\in\mathbb{N}$. First observe that:
\begin{align*}
X\setminus X^{(k+1)}\supseteq X\setminus X^{(k)}.
\end{align*}
If it happens that $x \in X\setminus X^{(k)}$ and hence has height less than $k$, then by hypothesis ${\coker\delta_{k-1}}_x=0$ and hence $C^k(F)_x=0$. Therefore since ${\coker{\delta_{k}}}_x$ is a quotient of $C^k(F)_x$ which is zero it follows that ${\coker{\delta_{k}}}_x=0$. Since any point $y$ which accumulates at $x$ has scattered height less than that of $x$, and hence less than $k$, it follows that ${\coker{\delta_{k}}}_y=0$. We can then see immediately that $C^{k+1}(F)_x=\underset{V\backepsilon\, x}{\colim} \underset{y\in V}{\prod}{\coker{\delta_{k}}}_y=0$.

The final case is the one where $x$ is isolated in $X^{(k)}$ and hence the scattered height of $x$ is equal to $k$. This case yields the following diagram:
\begin{center}
$\xymatrix{&C^k(F)\ar[dr]&&C^{k+1}(F)\\\coker{\delta_{k-1}^{\prime}}\ar[ur]^{\delta_{k}^{\prime}}&&\coker{\delta_{k}^{\prime}}\ar[ur]^{\delta_{k+1}^{\prime}}}$
\end{center}
Observe that all of the points $y$ accumulating at $x$ satisfy that the scattered height of $y$ is less than that of $x$ and hence less than $k$. It follows that ${\coker{\delta_{k-1}^{\prime}}}_y=0$ by the inductive hypothesis for every such $y$. Similar to the $k=0$ step above we have: 
\begin{align*}
C^{k}(F)_x=\underset{U\backepsilon\, x}{\colim}\underset{y\in U}{\prod}\,{\coker{\delta_{k-1}}}_y={\coker{\delta_{k-1}}}_x
\end{align*}
and that ${\delta_{k}^{\prime}}_x$ is an isomorphism so ${\coker{\delta_{k}^{\prime}}}_x=0$. All of the points which accumulate at $x$ must belong to $X\setminus X^{(k)}$ and we have already shown that these points $y$ satisfy ${\coker{\delta_{k}^{\prime}}}_y=0$. This information combined proves that $C^{k+1}(F)_x=\underset{U\backepsilon x}{\colim} \underset{y\in U}{\prod}{\coker{\delta_{k}}}_y=0$.
\end{proof}

Recall the following well-known proposition from category theory which will prove useful and is found in \cite[Proposition 2.3.10]{Weibel}.
\begin{proposition}\label{Adjointinj}
If $\left(F,G\right)$ is an adjoint pair where
\begin{align*}
F:\mathfrak{C}\rightarrow \mathfrak{D}\, \text{and}\, G:\mathfrak{D}\rightarrow \mathfrak{C}
\end{align*}
are functors of Abelian categories, satisfying that $F$ preserves monomorphisms then $G$ preserves injective objects.
\end{proposition}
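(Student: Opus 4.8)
The plan is to use the standard characterization that an object $J$ of an Abelian category is injective precisely when the contravariant functor $\Hom(-,J)$ is exact, equivalently when it carries monomorphisms to epimorphisms. Accordingly, to show that $G(I)$ is injective in $\mathfrak{C}$ for an injective object $I$ of $\mathfrak{D}$, I would take an arbitrary monomorphism $m\colon A\rightarrow B$ in $\mathfrak{C}$ and verify that the induced map
\[
\Hom_{\mathfrak{C}}(B,G(I))\rightarrow \Hom_{\mathfrak{C}}(A,G(I))
\]
is an epimorphism.

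The key tool is the adjunction isomorphism $\Hom_{\mathfrak{C}}(X,G(I))\cong \Hom_{\mathfrak{D}}(F(X),I)$, which is natural in $X$. First I would apply $F$ to $m$ to obtain $F(m)\colon F(A)\rightarrow F(B)$, which is a monomorphism by the hypothesis that $F$ preserves monomorphisms. Since $I$ is injective in $\mathfrak{D}$, the induced map $\Hom_{\mathfrak{D}}(F(B),I)\rightarrow \Hom_{\mathfrak{D}}(F(A),I)$ is then an epimorphism.

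Next I would assemble the naturality square for the adjunction applied to $m$: the horizontal maps are the adjunction isomorphisms for $A$ and for $B$, while the vertical maps are the ones induced by $m$ on the left and by $F(m)$ on the right. Naturality in the first variable forces this square to commute. Since the right-hand vertical map is an epimorphism and both horizontal maps are isomorphisms, a short diagram chase shows that the left-hand vertical map is an epimorphism as well. This establishes that $\Hom_{\mathfrak{C}}(-,G(I))$ sends the monomorphism $m$ to an epimorphism, and as $m$ was arbitrary, $G(I)$ is injective.

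The argument is essentially formal, so I do not expect a serious obstacle; the only point demanding care is verifying that the map on hom-sets induced by $m$ really corresponds, under the adjunction, to the map induced by $F(m)$ — that is, pinning down the naturality square precisely rather than merely asserting its existence. Once the commutativity of that square is in hand, the remainder is a routine diagram chase.
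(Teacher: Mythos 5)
Your proof is correct: it is the standard adjunction argument, using the characterization of injectives via $\Hom(-,I)$ turning monomorphisms into epimorphisms, together with the naturality square for the adjunction isomorphism $\Hom_{\mathfrak{C}}(X,G(I))\cong\Hom_{\mathfrak{D}}(F(X),I)$. The paper gives no proof of its own but simply cites Weibel (Proposition 2.3.10), and your argument is precisely the one found there, so there is nothing to bridge.
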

\begin{proposition}\label{Godinj}
If $F$ is a sheaf of $\mathbb{Q}$-modules over $X$ then $C^k(F)$ is injective in the category of sheaves of $\mathbb{Q}$-modules.
\end{proposition}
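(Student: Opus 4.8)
The plan is to reduce the whole statement to the single claim that $C^0(G)$ is injective for an arbitrary sheaf $G$ of $\mathbb{Q}$-modules. This reduction is immediate from Definition \ref{Godement}: there we have $C^k(F)=C^0(\coker\delta_{k-1})$, so every term of the Godement resolution is $C^0$ applied to some sheaf. Proving that $C^0(G)$ is injective for all $G$ therefore disposes of all $k$ simultaneously.

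To treat $C^0(G)$, I would use the description from Remark \ref{skyprod}, namely $C^0(G)=\prod_{y\in X}\iota_y(G_y)$, and establish injectivity one factor at a time before passing to the product. The key input is Proposition \ref{Adjointinj} applied to the stalk--skyscraper adjunction at each point $y\in X$. Writing $i_y\colon\{y\}\hookrightarrow X$ for the inclusion, the stalk functor $G\mapsto G_y$ is the pullback $i_y^{*}$ and the skyscraper functor $M\mapsto\iota_y(M)$ is the pushforward $i_{y*}$; these form an adjoint pair $(i_y^{*},i_{y*})$ with $i_y^{*}$ the left adjoint. Since the stalk is computed as the filtered colimit $\underset{U\backepsilon\, y}{\colim}\,G(U)$ over neighbourhoods of $y$, the functor $i_y^{*}$ is exact and in particular preserves monomorphisms. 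Proposition \ref{Adjointinj} then yields that $\iota_y=i_{y*}$ preserves injective objects.

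Next I would invoke the defining property of the coefficients: because $\mathbb{Q}$ is a field (more generally a semisimple ring), every $\mathbb{Q}$-module is injective. In particular each stalk $G_y$ is injective in the category of $\mathbb{Q}$-modules, so $\iota_y(G_y)$ is injective in the category of sheaves by the previous paragraph. Finally, a product of injective objects is again injective in any Abelian category with products, since $\Hom(-,\prod_y I_y)\cong\prod_y\Hom(-,I_y)$ sends short exact sequences to products of exact sequences, which remain exact. Applying this to $C^0(G)=\prod_{y\in X}\iota_y(G_y)$ shows $C^0(G)$ is injective, and hence so is every $C^k(F)$.

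The argument is essentially formal once the adjunction is set up correctly, so the main obstacle is pinning down the stalk--skyscraper adjunction in the correct direction and verifying that the stalk functor $i_y^{*}$ preserves monomorphisms (equivalently, is exact), as Proposition \ref{Adjointinj} requires this of the \emph{left} adjoint. The injectivity of $\mathbb{Q}$-modules and the stability of injectivity under products are standard; I would nonetheless take care to confirm that the category of sheaves of $\mathbb{Q}$-modules possesses the products appearing in Remark \ref{skyprod} and that they are computed objectwise, so that the $\Hom$-interchange above is legitimate.
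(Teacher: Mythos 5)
Your proposal is correct and follows essentially the same route as the paper's own proof: reduce to $C^0$ via the inductive definition, decompose $C^0(G)=\prod_{y\in X}\iota_y(G_y)$ using Remark \ref{skyprod}, and apply Proposition \ref{Adjointinj} to the stalk--skyscraper adjunction. You are in fact more careful than the paper, which leaves implicit both the injectivity of every $\mathbb{Q}$-module (the semisimplicity input) and the closure of injectives under products.
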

\begin{proof}
From the inductive way that $C^k(F)$ is defined it is sufficient to prove that $C^0(F)$ is injective. Remark \ref{skyprod} suggests that it is sufficient to prove that each $\iota_x(F_x)$ is injective. 

This follows from Proposition \ref{Adjointinj} applied to the adjoint pair of functors $\left(Ev_x(-),\iota_x(-)\right)$ where $Ev_x(F)=F_x$ for a sheaf $F$. 

Note that the left adjoint preserves monomorphisms since a monomorphism of sheaves is a morphism of sheaves such that the map at each stalk is a monomorphism of $\mathbb{Q}$-modules.
\end{proof}

We next look at a lemma which helps us with our injective dimension calculations since it will ultimately enable us to calculate $\text{Ext}$ groups. 

Recall from Definition \ref{Godement} that if $x\in X$ and $k<\text{ht}(X,x)$ then:
\begin{align*}
{\coker\delta_k}_x=\left[\underset{U\backepsilon\,x}{\colim}\underset{y\in U}{\prod}{\coker\delta_{k-1}}_y\right]/S
\end{align*}
where $S$ is the serration map from Remark \ref{sermapdef}. Explicitly if $a\in\coker{\delta_{k-1}}_x$ we define $(a,\underline{0})_x$ to be the element in $\coker{\delta_k}_x$ which is the germ at $x$ of the family which is $a$ in place $x$ and zero elsewhere. 
\begin{lemma}\label{Homologychar}
Suppose $X$ is a space with $\rank_{CB}(X)=n$ for $n\in \mathbb{N}$ such that $X^{(n)}=\emptyset$. Then for $j\leq n-1$, $x\in X^{(j)}$ and $F$ a sheaf over $X$, we have an isomorphism $\hom(\iota_x(\mathbb{Q}),C^k(F))\cong \coker{\delta_{k-1}}_x$ for $k<j$, and the map:
\begin{align*}
{\delta_{k+1}}_*:\hom(\iota_x(\mathbb{Q}),C^k(F))\rightarrow \hom(\iota_x(\mathbb{Q}),C^{k+1}(F))
\end{align*}
is given by the map:
\begin{align*}
\alpha_{k+1}:\coker{\delta_{k-1}}_x&\rightarrow \coker{\delta_{k}}_x\\a&\mapsto \left(a,\underline{0}\right)_x.
\end{align*}
\end{lemma}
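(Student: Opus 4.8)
The plan is to reduce everything to the product description of the Godement terms together with the adjunction $\left(Ev_x,\iota_x\right)$ already used in Proposition \ref{Godinj}. First I would record that, since $C^k(F)=C^0(\coker{\delta_{k-1}})$ (with the convention $\coker{\delta_{-1}}=F$ when $k=0$), Remark \ref{skyprod} applied to the sheaf $\coker{\delta_{k-1}}$ gives $C^k(F)=\prod_{y\in X}\iota_y\!\left(\coker{\delta_{k-1}}_y\right)$. Since $\hom\!\left(\iota_x(\mathbb{Q}),-\right)$ preserves products, the left-hand side becomes $\prod_{y\in X}\hom\!\left(\iota_x(\mathbb{Q}),\iota_y(\coker{\delta_{k-1}}_y)\right)$. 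The one computation I then need is $\hom\!\left(\iota_x(\mathbb{Q}),\iota_y(M)\right)$: using $Ev_y\dashv\iota_y$ this equals $\hom\!\left((\iota_x\mathbb{Q})_y,M\right)$, and the stalk $(\iota_x\mathbb{Q})_y$ is $\mathbb{Q}$ when $y=x$ and $0$ when $y\neq x$ (here I use that $X$ is Hausdorff, so $y$ has a neighbourhood missing $x$). Hence every factor with $y\neq x$ vanishes and the surviving factor is $\hom(\mathbb{Q},\coker{\delta_{k-1}}_x)=\coker{\delta_{k-1}}_x$, which gives the claimed isomorphism.

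Next I would identify the induced map. Writing $G=\coker{\delta_{k-1}}$ and $H=\coker{\delta_{k}}$, the differential factors as $\delta_{k+1}=\delta'_{k+1}\circ p_k$, where $p_k:C^0(G)\twoheadrightarrow H$ is the quotient and $\delta'_{k+1}:H\hookrightarrow C^0(H)=C^{k+1}(F)$ is the serration inclusion of $H$. I would trace a class $a\in G_x$ through these maps at the level of sections: under the isomorphism above, $a$ corresponds to the morphism $\iota_x(\mathbb{Q})\to C^k(F)$ whose value over an open $U\ni x$ sends $1$ to the family $(a,\underline{0})\in\prod_{y\in U}G_y$ that equals $a$ in the slot $x$ and is zero elsewhere. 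Applying $p_k$ gives a section $t\in H(U)$, and the serration inclusion $\delta'_{k+1}$ sends $t$ to its family of germs $(t_z)_{z\in U}$; the $x$-slot of this family is the germ at $x$ of the family that equals $a$ at $x$ and vanishes elsewhere, i.e. the element $(a,\underline{0})_x\in\coker{\delta_k}_x$ defined before the lemma. Reading off this $x$-component under the isomorphism for $C^{k+1}(F)$ then yields $\alpha_{k+1}(a)=(a,\underline{0})_x$, as claimed.

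Finally, on the bookkeeping: the hypotheses $x\in X^{(j)}$ and $k<j$ are there precisely so that $k<\text{ht}(X,x)$, which is what makes the germ presentation $\coker{\delta_k}_x=\big[\,\underset{U\ni x}{\colim}\prod_{y\in U}\coker{\delta_{k-1}}_y\,\big]/S$ and the symbol $(a,\underline{0})_x$ meaningful; this is also where Lemma \ref{lem0} is implicitly felt, since it controls where these cokernels are supported. I expect the main obstacle to be the second paragraph rather than the isomorphism: one must be careful not to confuse the serration map $S$, which records the germs of a genuine section, with the discontinuous family $(a,\underline{0})_x$ supported at the single point $x$. The cleanest way to avoid this is to carry out the trace at the level of sections over $U$ and only pass to the germ at the very end, so that extracting the $x$-component of the serrated family returns $(a,\underline{0})_x$ and not $S\big((a,\underline{0})_x\big)$.
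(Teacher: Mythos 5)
Your proposal is correct and takes essentially the same route as the paper: the isomorphism comes from Remark \ref{skyprod}, the adjunction $Ev_y\dashv\iota_y$, and the vanishing of the skyscraper stalks $(\iota_x\mathbb{Q})_y$ for $y\neq x$, and the identification of ${\delta_{k+1}}_*$ comes from factoring $\delta_{k+1}$ through $\coker{\delta_k}$ and reading off the $x$-component. Your bookkeeping at the level of sections over $U$, passing to germs only at the end, is just a slightly tidier organization of the same computation the paper carries out at germ level, where the Hausdorff property is invoked to show the germs of the family $(f_x,\underline{0})$ at points $y\neq x$ vanish.
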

\begin{proof}
Firstly notice that $C^k(F)$ is defined to be $C^0(\coker\delta_{k-1})$, so we begin by proving that $\hom(\iota_x(\mathbb{Q}),C^0(F))\cong F_x$. Observe that $C^0(F)=\underset{y\in X}{\prod}\iota_y(F_y)$ so we can write:
\begin{align*}
\hom(\iota_x(\mathbb{Q}),C^0(F))&=\hom(\iota_x(\mathbb{Q}),\underset{y\in X}{\prod}\iota_y(F_y))=\underset{y\in X}{\prod}\hom(\iota_x(\mathbb{Q}),\iota_y(F_y))\\&=\underset{y\in X}{\prod}\hom(\iota_x(\mathbb{Q})_y,F_y)=F_x.
\end{align*}
In particular if $f\in \hom(\iota_x(\mathbb{Q}),C^k(F))$, then this is determined by a map in $\hom(\iota_x(\mathbb{Q}),\iota_x(\coker{\delta_{k-1}}_x))$. This corresponds to a point $f_x\in \coker{\delta_{k-1}}_x$, so $f$ is given by the element: 
\begin{align*}
[f_x,\underline{0}]_x\in \underset{V\backepsilon\, x}{\colim}\left(\underset{y\in V}{\prod}\coker{\delta_{k-1}}_y\right)=C^0(\coker{\delta_{k-1}})_x,
\end{align*}
with this germ at $x$ of the family taking value $f_x$ in position $x$ and $0$ elsewhere. It follows that ${\delta_{k+1}}_*(f)$ corresponds to $\delta_{k+1}([f_x,\underline{0}]_x)$.

But we therefore have:
\begin{align*}
\delta_{k+1}([f_x,\underline{0}])=\left(\left[\left(f_x,\underline{0}\right)_x\right]^S,\left(\left[\left(f_x,\underline{0}\right)_y\right]^S\right)_{y\in U}\right)_x
\end{align*}
in $\underset{V\backepsilon\, x}{\colim}\left(\underset{y\in V}{\prod}\coker{\delta_{k}}_y\right)=C^0(\coker{\delta_{k}})_x$, where $\left[-\right]^S$ represents the class in the cokernel of the map $S$. This follows from the definition of the maps $\delta$ in Definition \ref{Godement}.

However if $x\neq y$ then since $X$ is Hausdorff there is a neighbourhood of $y$ not containing $x$ so $\left[\left(f_x,\underline{0}\right)_y\right]^S=\left[\left(\underline{0}\right)_y\right]^S$. Therefore $\delta_{k+1}\left(\left(f_x,\underline{0}\right)_x\right)$ can be written as $\left(\left[\left(f_x,\underline{0}\right)_x\right]^S,\left[\left(\underline{0}\right)_y\right]^S\right)_{y\in U}$.

We therefore have that ${\delta_{k+1}}_*$ is defined as follows:
\begin{align*}
{\delta_{k+1}}_*:\hom(\iota_x(\mathbb{Q}),C^k(F))&\rightarrow \hom(\iota_x(\mathbb{Q}),C^{k+1}(F))\\ [f_x,\underline{0}]_x &\mapsto \left(\left[\left(f_x,\underline{0}\right)_x\right]^S,\left[\left(\underline{0}\right)_y\right]^S\right)_{y\in U}
\end{align*} 

Since we are interested in what the maps correspond to as maps between the $x$ components of the products of $C^0(\coker\delta_{k-1})$ and $C^0(\coker\delta_k)$, we observe that it sends $f_x$ to $\left[\left(f_x,\underline{0}\right)_x\right]^S$.
\end{proof}
Now we consider the preceeding lemma for points in $X$ which either have infinite height or belong to the hull of $X$.
\begin{lemma}\label{homoloinf}
Suppose $X$ is a space with infinite Cantor-Bendixson rank. Then for $x\in X^{(n)}$ for any $n\in \mathbb{N}$, and $F$ a sheaf over $X$, we have an isomorphism $\hom(\iota_x(\mathbb{Q}),C^k(F))\cong \coker{\delta_{k-1}}_x$ for $k<n$, and the map:
\begin{align*}
{\delta_{k+1}}_*:\hom(\iota_x(\mathbb{Q}),C^k(F))\rightarrow \hom(\iota_x(\mathbb{Q}),C^{k+1}(F))
\end{align*}
is given by the map:
\begin{align*}
\alpha_{k+1}:\coker{\delta_{k-1}}_x&\rightarrow \coker{\delta_{k}}_x\\a&\mapsto \left(a,\underline{0}\right)_x.
\end{align*}
This also holds for points of infinite height and points in the hull.
\end{lemma}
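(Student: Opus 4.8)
The plan is to observe that the proof of Lemma \ref{Homologychar} is entirely local at $x$ and never invokes either the scattered hypothesis or the finiteness of $\rank_{CB}(X)$; consequently the very same argument delivers the present statement, and the only thing to check is that the index range is correctly interpreted in each of the three cases. First I would recall that $C^k(F)=C^0(\coker{\delta_{k-1}})$, so everything reduces to computing $\hom(\iota_x(\mathbb{Q}),C^0(G))$ for the single sheaf $G=\coker{\delta_{k-1}}$. Using the product decomposition $C^0(G)=\prod_{y\in X}\iota_y(G_y)$ from Remark \ref{skyprod}, the fact that $\hom(\iota_x(\mathbb{Q}),-)$ carries products to products, and the adjunction $(Ev_x,\iota_x)$ used in the proof of Proposition \ref{Godinj}, the computation collapses to a product of groups $\hom(\iota_x(\mathbb{Q})_y,G_y)$ ranging over $y\in X$. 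Since $X$ is Hausdorff, and hence $T_1$, the stalk $\iota_x(\mathbb{Q})_y$ vanishes for $y\neq x$, leaving $\hom(\mathbb{Q},G_x)=G_x=\coker{\delta_{k-1}}_x$. I would stress that not one of these ingredients used scatteredness or finite rank.

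Next I would reproduce the identification of the connecting map exactly as in Lemma \ref{Homologychar}: an element of $\hom(\iota_x(\mathbb{Q}),C^k(F))$ corresponds to $[f_x,\underline{0}]_x$ with $f_x\in\coker{\delta_{k-1}}_x$, and applying $\delta_{k+1}$ and then using the Hausdorff property to separate $x$ from every other point $y$, so that the $y$-component reduces to $\left[\left(\underline{0}\right)_y\right]^S$, shows that ${\delta_{k+1}}_*$ is the map $a\mapsto(a,\underline{0})_x$ on $x$-components. Once again this is purely the same formal computation and is valid verbatim, the Hausdorff separation being a global hypothesis on $X$ that is wholly insensitive to the value of $\rank_{CB}(X)$.

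It then remains only to match the three cases to the range $k<n$. For a point of finite height $n$ with $x\in X^{(n)}$ and $k<n$ the argument is identical to the scattered case treated in Lemma \ref{Homologychar}. A point of infinite height and a point of the perfect hull both lie in $X^{(n)}$ for every finite $n$: the hull $X^{(\alpha)}$ is contained in each earlier stage of the Cantor--Bendixson process, and a point of infinite height survives to every finite stage before becoming isolated at some infinite ordinal. Hence in both cases the inequality $k<n$ is satisfied for \emph{all} finite $k$, and the local computation above applies at each such $k$. Since the inductive construction of Definition \ref{Godement} produces $C^k(F)$ and $\coker{\delta_{k-1}}$ for all $k\in\mathbb{N}$ irrespective of the rank of $X$, there is no obstruction to running the argument at every finite stage.

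The only step requiring genuine care, and the one I expect to be the main obstacle, is the audit confirming that the proof of Lemma \ref{Homologychar} really used no global finiteness: one must check that the product and colimit manipulations are formal identities rather than assertions that secretly depend on the stalks stabilising or vanishing at some finite stage of the process. Once that audit is complete the result follows with no new work, the three cases being distinguished only by how large $n$ may be taken in the inequality $k<n$.
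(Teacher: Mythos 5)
Your proposal is correct and follows essentially the same route as the paper: the paper's own proof simply invokes the argument of Lemma \ref{Homologychar}, noting that it applies unchanged at any $x\in X^{(n)}$ with $k<n$, and that points of infinite height and points of the perfect hull lie in $X^{(k)}$ for every finite $k$ since they are never removed at any finite stage of the Cantor--Bendixson process. Your version is in fact more careful than the paper's, since you explicitly audit that the stalk computation via Remark \ref{skyprod}, the $(Ev_x,\iota_x)$ adjunction, and the Hausdorff separation argument never use scatteredness or finiteness of the rank, which the paper leaves implicit.
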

\begin{proof}
First note that since $X$ has infinite Cantor-Bendixson rank we have that for each $n\in \mathbb{N}$ there exists a point $x_n$ with height $n$. Therefore the result follows from Lemma \ref{Homologychar}. A similar argument also works for points of infinite height. Also the argument in Lemma \ref{Homologychar} also holds for any point in the perfect hull of $X$ for each $k$ since any such point is never removed in the Cantor-Bendixson process.
\end{proof}
The previous two lemmas indicate how the calculations in this paper differ from sheaf cohomology. In this setting we apply the functor $\hom(\iota_x(\mathbb{Q}),-)$ to an injective resolution and this is different from sheaf cohomology where we apply a functor $\hom(A,-)$ for some constant sheaf $A$.
\section{Injective dimension calculation}
We now formally give the definition of the injective dimension of sheaves of $\mathbb{Q}$-modules over a space $X$ as seen in \cite[Definition 4.1.1, Definition 10.5.10]{Weibel}.
\begin{definition}
The \textbf{injective dimension} of a sheaf $F$ over $X$ denoted by $\ID(F)$ is the minimum positive integer $n$ (if it exists) such that there is an injective resolution of the form
\begin{center}
$\xymatrix{0\ar[r]&X\ar[r]^{\epsilon}&I_0\ar[r]^{f_0}&I_1\ar[r]^{f_1}&\ldots\ar[r]^{f_{n-1}} &I_n\ar[r]&0}$.
\end{center}
where $I_j\neq 0$ for $j\leq n$. It is infinite if such a value doesn't exist.
\end{definition}
From \cite[4.1.2]{Weibel} we define the injective dimension of the category of sheaves of $\mathbb{Q}$-modules to be:
\begin{align*}
\sup\left\lbrace \ID(F)\mid\,F\,\in\text{Sheaf}_{\mathbb{Q}}(X)\right\rbrace, 
\end{align*}
where $\text{Sheaf}_{\mathbb{Q}}(X)$ denotes the category of sheaves of $\mathbb{Q}$-modules over $X$. We can now verify that the injective dimension of sheaves of $\mathbb{Q}$-modules is bounded above for a particular class of space.
\begin{proposition}
If $X$ is a scattered space with $\rank_{CB}(X)=n$ for $n\in\mathbb{N}$ then the injective dimension of sheaves of $\mathbb{Q}$-modules over $X$ is bounded above by $n-1$.
\end{proposition}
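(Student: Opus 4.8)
The plan is to use the Godement resolution directly, observing that over a scattered space of finite rank it is \emph{already} a finite injective resolution, so that no bespoke resolution need be constructed. By Proposition \ref{Godinj} every term $C^k(F)$ is injective, and by the inductive construction in Definition \ref{Godement} the sequence
\[
0 \to F \xrightarrow{\delta_0} C^0(F) \xrightarrow{\delta_1} C^1(F) \xrightarrow{\delta_2} \cdots
\]
is exact. Hence the entire content of the proposition reduces to showing that this resolution terminates at stage $n-1$.

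The key input is Lemma \ref{lem0}. Since $X$ is scattered with $\rank_{CB}(X)=n$, we have $X^{(n)}=\emptyset$, so $X\setminus X^{(n)}=X$. Applying Lemma \ref{lem0} with $k=n$ therefore gives $C^n(F)_x=0$ for \emph{every} $x\in X$. As a sheaf is determined by its stalks, and one with vanishing stalks everywhere is the zero sheaf, I conclude $C^n(F)=0$.

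It then remains to read off that the truncated complex
\[
0 \to F \xrightarrow{\delta_0} C^0(F) \to \cdots \xrightarrow{\delta_{n-1}} C^{n-1}(F) \to 0
\]
is an injective resolution. Exactness at each term up to $C^{n-2}(F)$ is inherited from the full Godement resolution, while exactness at $C^{n-1}(F)$ is the statement that $\delta_{n-1}$ is an epimorphism; this holds because $C^n(F)=C^0(\coker\delta_{n-1})=0$ forces $\coker\delta_{n-1}=0$, since $\coker\delta_{n-1}$ embeds into $C^0(\coker\delta_{n-1})$. Every term is injective by Proposition \ref{Godinj}, so this exhibits, for an arbitrary $F$, an injective resolution with nonzero terms only in degrees $0,\ldots,n-1$, whence $\ID(F)\le n-1$. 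Taking the supremum over all sheaves $F$ yields the claimed bound on the category.

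There is no genuine obstacle here: the argument is essentially bookkeeping built on Lemma \ref{lem0} and Proposition \ref{Godinj}, exactly as the remark preceding Lemma \ref{lem0} anticipates. The only points warranting a word of care are the implication \textup{``}all stalks zero $\Rightarrow$ the sheaf is zero\textup{''} and the verification that truncating at $C^{n-1}(F)$ preserves exactness at the final spot rather than merely leaving a complex; both are immediate once $C^n(F)=0$ is in hand.
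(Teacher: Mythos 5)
Your proposal is correct and follows exactly the paper's own argument: the paper likewise cites Proposition \ref{Godinj} for injectivity of the Godement terms and Lemma \ref{lem0} for their vanishing past degree $n-1$. The extra details you supply (all stalks zero implies the sheaf is zero, and $\coker\delta_{n-1}$ embedding into $C^n(F)=0$ so the truncated resolution stays exact) are precisely the bookkeeping the paper leaves implicit.
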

\begin{proof} 
By Proposition \ref{Godinj} the Godement resolution is an injective resolution. An application of Lemma \ref{lem0} shows that the terms of the Godement resolution are zero after term $n-1$. Therefore the injective dimension of each sheaf is less than or equal to $n-1$.
\end{proof}
In order to get equality it is sufficient to find a particular sheaf $F$ for which $\ID(F)\geq \rank_{CB}(X)-1$. To achieve this we look at \cite[Lemma 4.1.8, Exercise 10.7.2]{Weibel} which says $\ID(F)\leq \rank_{CB}(X)-2$ if and only if $\text{Ext}^{\rank_{CB}(X)-1}(A,F)=0$ for every sheaf $A$. In particular if we can find sheaves $A$ and $F$ such that $\text{Ext}^{\rank_{CB}(X)-1}(A,F)\neq 0$ then we must have that $\ID(F)>\rank_{CB}(X)-2$. This then forces $\ID(F)=\rank_{CB}(X)-1$.

Notice that the concept of an $\text{Ext}$ group in the category of $R$-modules for a ring $R$ stated above in \cite[Chapter 4]{Weibel} holds for general Abelian categories by \cite[Corollary 10.7.5]{Weibel}.

We now work towards verifying the lower bound. We will look at the following lemma which will illustrate that the Godement resolution is non-zero at term $k$ provided $k$ is less than $\rank_{CB}(X)$.

\begin{lemma}\label{godementnon0}
Let $X$ be a non-empty scattered space and $k\in\mathbb{N}$ be less than or equal to $\rank_{CB}(X)$. For each $x\in X^{(k)}$, ${\coker\delta_{k-1}}_x\neq 0$ in the Godement resolution of $c\mathbb{Q}$ the constant sheaf at $\mathbb{Q}$. 

Furthermore if $X$ is any space with a non-empty perfect hull and $k\in\mathbb{N}$, then for each $x\in X^{(k)}$ we have ${\coker\delta_{k-1}}_x\neq 0$ in the Godement resolution of $c\mathbb{Q}$.
\end{lemma}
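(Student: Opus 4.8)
The natural approach is induction on $k$, with the stalkwise presentation of $\coker\delta_{k-1}$ recalled just before Lemma \ref{Homologychar} as the main engine. The base case $k=0$ is immediate, since $\coker\delta_{-1}=F=c\mathbb{Q}$ has stalk $\mathbb{Q}\neq 0$ at every point of $X^{(0)}=X$. For the inductive step I assume the statement at level $k-1$, that is, ${\coker\delta_{k-2}}_y\neq 0$ for all $y\in X^{(k-1)}$, and I fix $x\in X^{(k)}$. Because $X^{(k)}=X^{(k-1)}\setminus(X^{(k-1)})^{\prime}$, the point $x$ is not isolated in $X^{(k-1)}$, so every neighbourhood of $x$ meets $X^{(k-1)}\setminus\{x\}$. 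The recursion presents ${\coker\delta_{k-1}}_x=\bigl[\colim_{U\ni x}\prod_{y\in U}{\coker\delta_{k-2}}_y\bigr]/S$ as the cokernel of the serration map $S$ of the sheaf $\coker\delta_{k-2}$; this presentation is available because $\operatorname{ht}(X,x)\geq k>k-1$.

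To produce a non-zero class I spread the witness over the points of $X^{(k-1)}$ accumulating at $x$, rather than concentrating it at $x$. Using the inductive hypothesis, choose for each $y\in X^{(k-1)}\setminus\{x\}$ a non-zero element $c_y\in{\coker\delta_{k-2}}_y$, and let $\gamma$ be the serration on a neighbourhood of $x$ whose value at such $y$ is $c_y$ and whose value at every other point, \emph{including $x$ itself}, is $0$. Let $c=\gamma_x$ be its germ. I claim $c\notin\operatorname{im}(S)$, so that $[c]\neq 0$ in ${\coker\delta_{k-1}}_x$. Suppose instead $c=S(b)$, where $b$ is represented by an honest section $s$ of $\coker\delta_{k-2}$ near $x$; equality of germs forces $s_y=\gamma(y)$ for all $y$ in some neighbourhood $V$ of $x$. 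In particular $s_x=\gamma(x)=0$, so $s$ vanishes on some neighbourhood $W\subseteq V$ of $x$. As $x$ is not isolated in $X^{(k-1)}$, the set $W$ contains some $y\in X^{(k-1)}\setminus\{x\}$, and for that $y$ we get $c_y=\gamma(y)=s_y=0$, contradicting $c_y\neq 0$. This argument uses only that $X$ is Hausdorff, so that a germ vanishing at $x$ vanishes on a whole neighbourhood of $x$. Hence ${\coker\delta_{k-1}}_x\neq 0$ and the induction closes.

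The second assertion follows from the same induction: a point $x$ in a non-empty perfect hull survives every stage of the Cantor-Bendixson process, so it lies in $X^{(k)}$ for every $k$ and the recursion applies, while perfectness of the hull makes $x$ a limit of hull points, which sit inside $X^{(k-1)}$ and supply the accumulating points $y$ used above. The step I expect to be the main obstacle is exactly the verification that $c\notin\operatorname{im}(S)$, and behind it the realisation that the witness must be spread out with value $0$ at $x$. The tempting alternative of a single spike $(a,\underline{0})_x$ with $0\neq a\in{\coker\delta_{k-2}}_x$ fails once $k\geq 2$: because $\coker\delta_{k-2}$ is a quotient of a serration sheaf it admits sections supported at the single point $x$, so for such an $a$ the spike already lies in $\operatorname{im}(S)$ and dies in ${\coker\delta_{k-1}}_x$. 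Locating a class that genuinely survives, and checking its survival through the Hausdorff--accumulation argument, is the crux; the index bookkeeping and the applicability of the recursion at $x$ are routine by comparison.
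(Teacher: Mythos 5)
Your proof is correct and takes essentially the same route as the paper: an induction on $k$ in which the non-zero class at $x\in X^{(k)}$ is the germ of a family vanishing at $x$ itself but taking non-zero values (supplied by the inductive hypothesis) at the points of $X^{(k-1)}$ accumulating at $x$, with non-membership in $\im(S)$ forced by the fact that a section whose germ at $x$ is zero vanishes on a whole neighbourhood of $x$. The differences are cosmetic: the paper rebuilds its non-zero elements $b_y$ explicitly as spike classes $\left[\left(0_y,a_z\right)_{z}\right]^S$ instead of quoting the inductive hypothesis directly, and it handles the perfect-hull and infinite-rank cases in a closing paragraph, whereas your single induction covers all $x\in X^{(k)}$ uniformly.
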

\begin{proof}
We begin with the scattered case and we will prove this using an induction argument. Since 
\begin{align*}
\underset{U\backepsilon\, x}{\colim}\,c\mathbb{Q}(U)=c\mathbb{Q}_x=\underset{U\backepsilon\,x}{\colim}\,Pc\mathbb{Q}(U)=\mathbb{Q}
\end{align*}
where $Pc\mathbb{Q}$ represents the presheaf, any $q_x\in \mathbb{Q}_x$ is represented by some $q\in \mathbb{Q}$. We therefore have the following diagram by \cite[pages 36-37]{Bredon}:
\begin{align*}
\mathbb{Q}&\rightarrow\underset{y\in U}{\prod}\mathbb{Q}\rightarrow\underset{V\backepsilon\, x}{\colim} \underset{y\in V}{\prod}\mathbb{Q}\\q&\mapsto(q_y)_{y\in U}\mapsto\left((q_y)_{y\in U}\right)_x
\end{align*} 
which induces a map:
\begin{align*}
\mathbb{Q}&\rightarrow \underset{V\backepsilon\, x}{\colim}\underset{y \in V}{\prod}\mathbb{Q}\\q_x&\mapsto \left((q_y)_{y\in U}\right)_x
\end{align*}
We call this map the serration map and denote it by $S$. This is not surjective since we have a point $\left(0_x,\underline{1}\right)_x$ not in the image of $S$. This point is non-zero since if $S(a)=\left[\left(0_x,\underline{1}\right)_x\right]^S$ then $a_x=0$ and so there is an open neighbourhood $U$ of $x$ such that $a_y=0$ for $y\in U$. However the definition of the serration map shows that $a_y=1$ also for $y\neq x$ which is a contradiction. Therefore $\coker{\delta_0}_x\neq 0$ for $x\in X^{(1)}$.

Suppose this holds up to some $n\in\mathbb{N}$ and for any $x\in X^{(n+1)}$. By assumption we have that 
\begin{align*}
0\neq \coker{\delta_n}_x=\underset{U\backepsilon \,x}{\colim}\left(\prod_{y\in U}\coker{\delta_{n-1}}_y\right)/S
\end{align*}
Using the fact that sheafification preserves stalks of presheaves we have a map using \cite[pages 36-37]{Bredon} as follows:
\begin{align*}
\left(\underset{y\in U}{\prod}\coker{\delta_{n-1}}_y\right)/S&\rightarrow \underset{z\in U}{\prod}\coker{\delta_n}_z\rightarrow\underset{V\backepsilon \,x}{\colim}\underset{z\in V}{\prod}\coker{\delta_n}_z\\ \left[(a_y)_{y\in U}\right]^S&\mapsto \left(\left[\left((a_y)_{y\in U}\right]^S\right)_z\right)_{z\in U}\mapsto \left(\left(\left(\left[(a_y)_{y\in U}\right]^S\right)_z\right)_{z\in U}\right)_x 
\end{align*}
which induces a map:
\begin{align*}
\coker{\delta_n}_x&\rightarrow \underset{V\backepsilon\, x}{\colim}\underset{y\in V}{\prod}\coker{\delta_n}_y\\ \left(\left[(a_y)_{y\in U}\right]^S\right)_x&\mapsto \left(\left(\left(\left[(a_y)_{y\in U}\right]^S\right)_z\right)_{z\in U}\right)_x
\end{align*}
Let $U$ be any open neighbourhood of $x$. Then for each $y\in U$ such that $y\in X^{(n)}$ or $X^{(n+1)}$ we can choose $0\neq a_y\in \coker{\delta_{n-1}}_y$ by the inductive hypothesis. Set $s^y=\left(0_y,a_z\right)_{z\in U\setminus\left\lbrace y\right\rbrace}$, then $\left[\left(s^y\right)_y\right]^S\neq 0$ in $\coker{\delta_n}_y$ and we denote this by $b_y$. This is shown to be non-zero by following a similar argument to that seen earlier in this proof. We can therefore consider:
\begin{align*}
\left(\left[\left(0_x,b_y\right)_{y\in U\setminus\left\lbrace x\right\rbrace}\right]^S\right)_x
\end{align*}
which is not in the image of the serration map so $\coker{\delta_{n+1}}_x\neq 0$. This is also seen by referring to the previous argument seen earlier in this proof.

Note if $\rank_{CB}(X)$ is infinite then for each $k\in \mathbb{N}$ we have that each $X^{(k)}$ has isolated points to remove, so this is true for every $k$. If $\rank_{CB}(X)=n$ and $X^{(n)}=\emptyset$ then $X^{(n-1)}$ is discrete and therefore satisfies that $C^n(F)=0$ by Lemma \ref{lem0}. It follows that the argument therefore only results in non-zero stalks for $k\leq n-1$. If $x$ belongs to the perfect hull of $X$ then this argument also holds for each $k\in\mathbb{N}$ since the hull is contained in each $X^{(k)}$.
\end{proof}

We now use the above calculations to verify the injective dimension of sheaves using the Cantor-Bendixson dimension.
\begin{theorem}\label{ID_CB}
Suppose $X$ is a space with $\rank_{CB}(X)=n$ such that $X^{(n)}=\emptyset$. Then the category of sheaves over $X$ has injective dimension equal to $n-1$.
\end{theorem}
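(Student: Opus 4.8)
The upper bound $\ID\le n-1$ is already in hand from the preceding Proposition, so the entire task is to produce a single sheaf whose injective dimension attains $n-1$. The plan is to invoke the $\mathrm{Ext}$ criterion quoted above: it suffices to exhibit sheaves $A$ and $F$ with $\mathrm{Ext}^{n-1}(A,F)\neq 0$, since this forces $\ID(F)>n-2$, and together with the upper bound yields $\ID(F)=n-1$ and hence injective dimension exactly $n-1$ for the whole category. I would take $F=c\mathbb{Q}$, the constant sheaf, precisely because Lemma \ref{godementnon0} controls the Godement cokernels of $c\mathbb{Q}$; and $A=\iota_x(\mathbb{Q})$, the skyscraper at a point $x$ of maximal height $n-1$. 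Such an $x$ exists: since $\rank_{CB}(X)=n$ and $X^{(n)}=\emptyset$ we have $X^{(n-1)}\neq\emptyset$, and any $x\in X^{(n-1)}$ has height exactly $n-1$.

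To compute $\mathrm{Ext}^{n-1}(\iota_x(\mathbb{Q}),c\mathbb{Q})$ I would apply $\hom(\iota_x(\mathbb{Q}),-)$ to the Godement resolution of $c\mathbb{Q}$, which is an injective resolution by Proposition \ref{Godinj}. By Lemma \ref{Homologychar}, together with the object-level identity $\hom(\iota_x(\mathbb{Q}),C^0(G))\cong G_x$ established in its proof (which holds for every sheaf $G$ and so applies at the top degree $k=n-1$ as well), the resulting cochain complex is identified with
\begin{align*}
\mathbb{Q}\xrightarrow{\ \alpha_0\ }\coker{\delta_0}_x\xrightarrow{\ \alpha_1\ }\cdots\xrightarrow{\ \alpha_{n-1}\ }\coker{\delta_{n-2}}_x\longrightarrow 0,
\end{align*}
where each map $\alpha_k$ is $a\mapsto(a,\underline{0})_x$. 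The complex terminates at degree $n-1$ because $X^{(n)}=\emptyset$ forces $C^{n}(c\mathbb{Q})=0$ by Lemma \ref{lem0}, so the differential leaving degree $n-1$ vanishes. Consequently
\begin{align*}
\mathrm{Ext}^{n-1}(\iota_x(\mathbb{Q}),c\mathbb{Q})\cong\coker\!\left(\alpha_{n-1}\colon\coker{\delta_{n-3}}_x\to\coker{\delta_{n-2}}_x\right),
\end{align*}
so everything reduces to showing this top cokernel is non-zero, i.e.\ that $\alpha_{n-1}$ is not surjective.

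This last step is where the real work lies, and it is the main obstacle. By Lemma \ref{godementnon0} we already know $\coker{\delta_{n-2}}_x\neq 0$, since $x\in X^{(n-1)}$ and $n-1\le\rank_{CB}(X)$. The key observation is that the image of $\alpha_{n-1}$ consists only of classes $(a,\underline{0})_x$ concentrated at $x$, whereas the explicit non-zero witness produced in the inductive step of the proof of Lemma \ref{godementnon0} — a class of the form $\left[\left(0_x,b_y\right)_{y\neq x}\right]_x$ supported on a punctured neighbourhood of $x$ — has vanishing germ at $x$. I would verify that such a witness cannot equal any $(a,\underline{0})_x$ in $\coker{\delta_{n-2}}_x$: their difference would have to lie in the image of the serration map $S$, and the same Hausdorff-separation argument used repeatedly in Lemma \ref{godementnon0} rules this out, the two classes having disjoint supports. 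This exhibits an element of $\coker{\delta_{n-2}}_x$ outside $\im\alpha_{n-1}$, so $\mathrm{Ext}^{n-1}(\iota_x(\mathbb{Q}),c\mathbb{Q})\neq 0$, giving $\ID(c\mathbb{Q})=n-1$ and hence the theorem. The boundary case $n=1$ is immediate, since then $X$ is discrete, the complex is concentrated in degree $0$ with $H^0\cong\mathbb{Q}$, and every sheaf is injective.
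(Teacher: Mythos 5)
Your proposal is correct and follows essentially the same route as the paper's own proof: the same choice of $F=c\mathbb{Q}$ and $A=\iota_x(\mathbb{Q})$ at a point $x$ of maximal height $n-1$, the same identification of the Hom-complex via Lemma \ref{Homologychar} applied to the Godement resolution, and the same non-surjectivity argument for the top map $\alpha_{n-1}$ using a witness class supported on a punctured neighbourhood of $x$, exactly as constructed in Lemma \ref{godementnon0}. The only differences are cosmetic (your explicit justification of the top-degree identification and of the boundary case $n=1$), so nothing further is needed.
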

\begin{proof}
To see this we need to find an object in the category of sheaves over $X$ so that $\ID(X)=n-1$, we will show that $c\mathbb{Q}$ satisfies $\id(c\mathbb{Q})=n-1$. Firstly by Lemmas \ref{lem0} we know that $\ID(c\mathbb{Q})\leq n-1$ since the Godement resolution gives an injective resolution of the form:
\begin{align*}
\xymatrix{0\ar[r]&c\mathbb{Q}\ar[r]^{\delta_0}&I^0\ar[r]^{\delta_1}&\ldots\ar[r]^{\delta_{n-2}}&I^{n-2}\ar[r]^{\delta_{n-1}}&I^{n-1}\ar[r]&0}
\end{align*} 
From Lemma \ref{godementnon0} we know that each $I_j\neq 0$. We will show that the $Ext^{n-1}\left(\iota_x(\mathbb{Q}),c\mathbb{Q}\right)$ group calculated by the above injective resolution is non-zero. Let $x$ be an element of $X$ with $\text{ht}(X,x)=n-1$ (any point of $X$ with maximal height). 

We apply the functor $\Hom(\iota_x(\mathbb{Q}),-)$ and forget the $c\mathbb{Q}$ term to get:
\begin{align*}
\xymatrix{\Hom(\iota_x(\mathbb{Q}),I^0)\ar[r]^-{{\delta_1}_*}&\Hom(\iota_x(\mathbb{Q}),I^1)\ar[r]^-{{\delta_2}_*}&\ldots\ar[r]^-{{\delta_{n-2}}_*}&\Hom(\iota_x(\mathbb{Q}),I^{n-2})\ar[d]^{{\delta_{n-2}}_*}\\&&0&\Hom(\iota_x(\mathbb{Q}),I^{n-1})\ar[l]}
\end{align*}
which we can no longer assume to be exact. This is equal to the following sequence:
\begin{align*}
\xymatrix{\mathbb{Q}\ar[r]^-{\alpha_1}&\coker{\delta_0}_x\ar[r]^-{\alpha_2}&\ldots\ar[r]^-{\alpha_{n-3}}&\coker{\delta_{n-4}}_x\ar[d]^{\alpha_{n-2}}\\&0&\coker{\delta_{n-2}}_x\ar[l]^{\alpha_{n}}&\coker{\delta_{n-3}}_x\ar[l]^{\alpha_{n-1}}}
\end{align*}
We want to show that $\text{Ext}^{(n-1)}\left(\iota_x(\mathbb{Q}),c\mathbb{Q}\right)=\ker\alpha_n/\im\alpha_{n-1}\neq 0$ and $\text{Ext}^n\left(\iota_x(\mathbb{Q}),c\mathbb{Q}\right)=\ker \alpha_{n+1}/\im \alpha_n=0$. It is clear that $\text{Ext}^n\left(\iota_x(\mathbb{Q}),c\mathbb{Q}\right)$ is $0$. For the other we need to prove that the map:
\begin{align*}
\alpha_{n-1}:\coker {\delta_{n-3}}_x&\rightarrow \coker {\delta_{n-2}}_x\\a&\mapsto\left(a,\underline{0}\right)_x
\end{align*}
is not surjective. This is done in a similar fashion to Propositions \ref{godementnon0}.

For any open neighbourhood $U$ of $x$ there are infinitely many points $z$ of $U$ such that $z\in X^{(n-2)}$ and $\coker{\delta_{n-3}}_z\neq 0$ so we can choose such a point $a_z$ for each $z$. Consider:
\begin{align*}
a=\left[\left((0_x,a_z)_{z\in U\setminus\left\lbrace x\right\rbrace}\right)_x\right]^S\in \coker{\delta_{n-2}}_x. 
\end{align*}
If $t_x\in \coker{\delta_{n-3}}_x$ is in the preimage of $a$ with respect to $\alpha_{n-1}$ we would have:
\begin{align*}
\left[\left(t_x,0\right)_x\right]^S=\alpha_{n-1}(t_x)=\left[\left((0_x,a_z)_{z\in U\setminus\left\lbrace x\right\rbrace}\right)_x\right]^S
\end{align*}
so $t_x=0$ which implies that $\left[\left((0_x,a_z)_{z\in U\setminus\left\lbrace x\right\rbrace}\right)_x\right]^S=0$. But this cannot be the case since there are infinitely many $z$ satisfying that $a_z\neq \underline{0}$ by construction, so we have a contradiction and $\alpha_{n-1}$ cannot be surjective.
\end{proof}
We now deal with the case where the Cantor-Bendixson dimension is infinite.
\begin{theorem}\label{ID_CB_infty}
If $X$ is a space with infinite Cantor-Bendixson rank, then the injective dimension of sheaves of $\mathbb{Q}$-modules over $X$ is infinite.
\end{theorem}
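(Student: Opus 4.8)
The plan is to reproduce the mechanism of Theorem \ref{ID_CB}, but rather than isolating a single top-degree Ext group I use points of unbounded height to produce a non-vanishing Ext group in every degree. By the criterion of \cite[Lemma 4.1.8, Exercise 10.7.2]{Weibel} it suffices to exhibit, for each $N\in\mathbb{N}$, sheaves $A$ and $F$ with $\text{Ext}^N(A,F)\neq 0$; as in the scattered case of Theorem \ref{ID_CB} I would keep $F=c\mathbb{Q}$ fixed and only vary the probe $A$.

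First I would extract test points of every finite height from the hypothesis. Since $\rank_{CB}(X)$ is infinite, the Cantor--Bendixson process never stabilises at a finite stage, so $X^{(N)}\neq X^{(N+1)}$ and hence $X^{(N)}$ has an isolated point $x_N$, that is, a point of height exactly $N$, for every $N$. Fixing $N$ and setting $A=\iota_{x_N}(\mathbb{Q})$, I would apply $\hom(\iota_{x_N}(\mathbb{Q}),-)$ to the Godement resolution of $c\mathbb{Q}$. By Lemma \ref{homoloinf} the resulting complex localises at $x_N$, having terms $\coker{\delta_{k-1}}_{x_N}$ and differentials $\alpha_k\colon a\mapsto (a,\underline{0})_{x_N}$, exactly as in the finite-rank argument.

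The decisive point is that, even though the global Godement resolution is now infinite, its stalks over a height-$N$ point terminate at degree $N$. Indeed $\coker{\delta_{N-1}}_{x_N}\neq 0$ by Lemma \ref{godementnon0} (applied with $k=N$, as $x_N\in X^{(N)}$), while $x_N\notin X^{(N+1)}$ forces $\coker{\delta_N}_{x_N}=0$ by Lemma \ref{lem0}, so $\alpha_{N+1}=0$. Consequently
\[
\text{Ext}^N(\iota_{x_N}(\mathbb{Q}),c\mathbb{Q})=\coker{\delta_{N-1}}_{x_N}/\im\alpha_N=\coker(\alpha_N),
\]
and it remains only to show that $\alpha_N\colon\coker{\delta_{N-2}}_{x_N}\to\coker{\delta_{N-1}}_{x_N}$ is not surjective. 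This is the same non-surjectivity computation as in Theorem \ref{ID_CB}: since $x_N$ is a limit of the stage $X^{(N-1)}$ in the Hausdorff (hence $T_1$) space $X$, every neighbourhood of $x_N$ contains infinitely many points $z\in X^{(N-1)}$, each with $\coker{\delta_{N-2}}_z\neq 0$; collecting nonzero classes $a_z$ into $[(0_{x_N},a_z)_{z}]^S$ produces an element of $\coker{\delta_{N-1}}_{x_N}$ outside the image of $\alpha_N$.

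Assembling these steps over all $N$ gives $\text{Ext}^N(\iota_{x_N}(\mathbb{Q}),c\mathbb{Q})\neq 0$ for every $N$, so $\ID(c\mathbb{Q})$ exceeds every finite bound and the category of sheaves of $\mathbb{Q}$-modules over $X$ has infinite injective dimension. I expect the one step deserving care to be the termination claim $\coker{\delta_N}_{x_N}=0$: it is what allows a finite stalk-level complex to compute a genuine top Ext group even though the ambient resolution never stops, and it rests on the inductive proof of Lemma \ref{lem0}, which in fact yields $\coker{\delta_{k-1}}_y=0$ for all $y\in X\setminus X^{(k)}$. The remainder is a faithful transcription of the finite-rank proof with the fixed rank replaced by the arbitrary height $N$.
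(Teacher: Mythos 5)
Your proposal is correct and takes essentially the same route as the paper: extract points $x_N$ of height exactly $N$ for every $N$ and show $\text{Ext}^N(\iota_{x_N}(\mathbb{Q}),c\mathbb{Q})\neq 0$ using the Godement resolution of $c\mathbb{Q}$, exactly as in Theorem \ref{ID_CB}. If anything, your write-up is more careful than the paper's two-line proof (which simply cites Theorem \ref{ID_CB}), since you explicitly justify the one point where the finite-rank argument needs adaptation, namely that the stalk complex over a height-$N$ point still terminates ($\coker{\delta_N}_{x_N}=0$ by the induction in Lemma \ref{lem0}) even though the ambient resolution is now infinite.
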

\begin{proof}
Since the Cantor-Bendixson rank of $X$ is infinite there exists a sequence of points $x_n$ each having height $n$. As a consequence of Theorem \ref{ID_CB} for each $x_n$ we know that $\text{Ext}^n\left(\iota_{x_n}(\mathbb{Q}),c\mathbb{Q}\right)\neq 0$ and this happens for each $n$ since we do not have a maximal height. This proves the result. 
\end{proof}
We are left to deal with the case where $X$ is not scattered but has finite Cantor-Bendixson rank. In the above cases when we resolve with respect to $\iota_x\left(\mathbb{Q}\right)$ the resolved sequence becomes zero after $\text{ht}(x)-1$ so the kernel of the map $\alpha_{\text{ht}(x)+1}$ is $\coker{\delta_{\text{ht}(x)-1}}_x$ which is advantageous since we can choose any point of $\coker{\delta_{\text{ht}(x)-1}}$ not in the image of $\alpha_{\text{ht}(x)}$. This changes when we are working in the case where $X$ has a perfect hull.

We know that the following Godement resolution is infinite:
\begin{align*}
\xymatrix{0\ar[r]&c\mathbb{Q}\ar[r]^{\delta_0}&I^0\ar[r]^{\delta_1}&\ldots\ar[r]^{\delta_{n-1}}&I^{n-1}\ar[r]^{\delta_{n}}&I^n\ar[r]^{\delta_n}&\ldots}
\end{align*} 
Therefore after resolving like above for a point $x$ in the perfect hull we obtain the following infinite sequence:
\begin{align*}
\xymatrix{0\ar[r]&\mathbb{Q}\ar[r]^-{\alpha_1}&\coker{\delta_0}_x\ar[r]^-{\alpha_2}&\ldots\ar[r]^-{\alpha_{n-2}}&\coker{\delta_{n-3}}_x\ar[d]^{\alpha_{n-1}}\\&&\ldots&\coker{\delta_{n-1}}_x\ar[l]^{\alpha_{n+1}}&\coker{\delta_{n-2}}_x\ar[l]^{\alpha_{n}}}
\end{align*}
The important thing to notice is that since this is non-zero at infinitely many places, when calculating the group $Ext^n$ we can't just chose any representative of $\coker{\delta_{n-1}}_x$ since the kernel is not everything.

In order to choose something in the kernel we need to adjust our argument above, namely instead of choosing a representative $(0_x,s^y)_{y\in U\setminus\left\lbrace x\right\rbrace}$ with $0\neq s^y\in\coker{\delta_{n-2}}_y$ arbitrary, we need $(s^y)_{y\in U\setminus\left\lbrace x\right\rbrace}$ to be determined by a section $s$ over $\coker{\delta_{n-2}}$. That is we want each $s^y$ to be of the form $s_y$ for that section $s$, and such that each open neighbourhood $U$ of $x$ contains infinitely many $y$ such that $s^y\neq 0$.

Recalling a fact from sheaf theory that a section $s$ over an open neighbourhood $U$ of $x$ has germ $s_x=0$ if and only if $s$ restricts to some smaller neighbourhood to give the zero section. Also recall that we can build a section in $\coker\delta_{n-2}(U)$ by considering $\underset{y\in U}{\prod}\coker\delta_{n-3}/\text{Serrate}$. This means that if we can construct the family $\left[(a^y)_{y\in U\setminus\left\lbrace x\right\rbrace}\right]_S$ to be an alternating family where infinitely many $a^y\neq 0$ in ${\coker\delta_{n-3}}_y$ and infinitely many do equal zero, then we may have a suitable section $s$ to proceed with the proof. This approach needs the following condition to proceed:

If $a^y=0$ then then any neighbourhood $U$ of $y$ contains infinitely many points $z$ such that $a^z\neq 0$.

If we can construct given any net converging to $x$, two term-wise disjoint subnets then we can do the above construction to show that the injective dimension of sheaves in this case is infinite, provided the sequence is set up to satisfy the condition. With this in mind we have the following conjecture.
\begin{conjecture}\label{Conject}
If $X$ has finite Cantor-Bendixson rank and non-empty perfect hull then the injective dimension of sheaves of $\mathbb{Q}$-modules over $X$ is infinite.
\end{conjecture}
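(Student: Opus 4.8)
The plan is to show that for a point $x$ in the non-empty perfect hull $X_H$ and for \emph{every} $n\in\mathbb{N}$ one has $\text{Ext}^n(\iota_x(\mathbb{Q}),c\mathbb{Q})\neq 0$; since this would hold for all $n$, no finite injective resolution of $c\mathbb{Q}$ can exist and the injective dimension is infinite. By Lemma \ref{homoloinf} the point $x$ is never removed by the Cantor-Bendixson process, so applying $\Hom(\iota_x(\mathbb{Q}),-)$ to the Godement resolution of $c\mathbb{Q}$ yields the infinite complex
\begin{align*}
0\rightarrow\mathbb{Q}\xrightarrow{\alpha_1}\coker{\delta_0}_x\xrightarrow{\alpha_2}\coker{\delta_1}_x\xrightarrow{\alpha_3}\cdots
\end{align*}
in which each differential is $\alpha_{k+1}(a)=(a,\underline{0})_x$, and by Lemma \ref{godementnon0} every term $\coker{\delta_{k-1}}_x$ is non-zero. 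The task is therefore to exhibit, for each $n$, a class in $\ker\alpha_{n+1}\setminus\im\alpha_n$.

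Following the strategy indicated after Theorem \ref{ID_CB_infty}, the class I would use is the germ at $x$ of a family $[(0_x,a_z)_{z\in U\setminus\{x\}}]^S$, but now the family $(a_z)_z$ must be the restriction of a genuine section $s$ of $\coker{\delta_{n-2}}$ over a neighbourhood of $x$, rather than an arbitrary stalkwise non-zero collection. There are two conditions to verify. First, the class must lie in $\ker\alpha_{n+1}$, which forces the germ to be annihilated by the next serration map; this is where the section is required to vanish on a suitable set of points accumulating at $x$. Second, it must lie outside $\im\alpha_n$, which by the non-surjectivity argument of Lemma \ref{godementnon0} and Theorem \ref{ID_CB} is guaranteed provided every neighbourhood of $x$ contains infinitely many $z$ with $a_z\neq 0$.

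The decisive ingredient is the \emph{alternating} condition flagged in the discussion above: infinitely many $a_z$ should be non-zero and infinitely many should vanish, and moreover whenever $a_y=0$ every neighbourhood of $y$ must still contain infinitely many $z$ with $a_z\neq 0$. To produce such a family I would first isolate a purely topological splitting lemma: since $X_H$ is perfect, $x$ is an accumulation point of $X_H$, and the claim is that any net in $X_H$ converging to $x$ can be partitioned into two term-wise disjoint subnets each still converging to $x$. Iterating this branching inside the perfect hull would provide, at each level $k\leq n-2$ of the cokernel tower, the interleaving of vanishing and non-vanishing germs needed to meet the alternating condition while keeping the global object a bona fide section of $\coker{\delta_{k-1}}$.

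The main obstacle is precisely what distinguishes this case from the scattered one and is the reason the statement is only a conjecture. In the scattered setting the complex truncates at height $\text{ht}(X,x)-1$, so the kernel at the top level is automatically everything and one may choose \emph{any} non-image representative; here the complex is non-zero at infinitely many places, so the kernel and image requirements must be met simultaneously. Guaranteeing that the constructed germ is genuinely killed by serration, rather than merely appearing to vanish stalkwise, requires controlling how the chosen section restricts on \emph{all} neighbourhoods of $x$ at once, and the net-theoretic splitting is not obviously available when the perfect hull accumulates at $x$ in a sufficiently pathological (for instance non-countably-based) manner. Establishing the splitting lemma in full generality, and then checking that the alternating family it produces does land in $\ker\alpha_{n+1}$, is the step I expect to be hardest.
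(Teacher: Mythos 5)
The statement you are proving is Conjecture \ref{Conject}: the paper itself contains no proof of it and explicitly leaves it for future work, offering only the heuristic discussion that follows Theorem \ref{ID_CB_infty}. Your proposal reproduces that heuristic --- work at a point $x$ of the perfect hull, use Lemmas \ref{homoloinf} and \ref{godementnon0} to obtain the infinite complex of non-zero stalks $\coker{\delta_{k-1}}_x$, and try to produce a class in $\ker\alpha_{n+1}\setminus\im\alpha_n$ from an ``alternating'' family tied to a genuine section of $\coker\delta_{n-2}$ --- but it closes none of the gaps that make the statement a conjecture rather than a theorem, as you yourself concede in your final paragraph. So this is not a proof; it is a plan of attack, and the same plan the paper already records.

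To be concrete about where it is incomplete. First, the topological splitting claim (every net in $X_H$ converging to $x$ splits into two term-wise disjoint subnets still converging to $x$) is asserted, not proved, and you give no argument that it holds for general, possibly non-countably-based, spaces. Second, and more fundamentally, there is a tension in the kernel/image requirements that your sketch does not confront. A class in $\coker{\delta_{n-1}}_x$ is represented by the germ of a family $(a_z)_{z\in U}$ with $a_z\in\coker{\delta_{n-2}}_z$; if that family is \emph{globally} the serration of a section $s$ of $\coker\delta_{n-2}$ near $x$, then its germ equals $S(s_x)$ and the class is zero by the very definition of the cokernel, so it yields nothing in $\text{Ext}^n$. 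What is actually required is a family that agrees locally with serrations of sections at every point $z\neq x$ (this is what forces the induced section of $\coker\delta_{n-1}$ to vanish stalkwise away from $x$, placing its germ in $\ker\alpha_{n+1}$), yet fails to be the serration of any section near $x$ even after altering its value at the single point $x$ (this is what keeps the class out of $\im\alpha_n$, and in particular away from zero). Your prescription ``take the restriction of a genuine section $s$, vanishing on a suitable set accumulating at $x$'' does not address what happens at $x$ itself, which is exactly where the construction must fail to be a section; the alternating condition is intended to engineer this, but neither you nor the paper verifies that the branching construction produces families with both properties simultaneously --- indeed you defer precisely this verification. Until these two points are established, the conjecture remains open.
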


We now look at examples of spaces and the application of the result relating injective dimension of sheaves of $\mathbb{Q}$-modules over $X$ to the Cantor-Bendixson rank of $X$. Our primary interest is spaces of closed subgroups, see \cite{SugrueT} for more details. 
\begin{example}
If $G$ is a discrete group then $SG$ is a finite discrete space and hence has Cantor-Bendixson dimension $1$. Therefore Theorem \ref{ID_CB} implies that the injective dimension of sheaves of $\mathbb{Q}$-modules over $SG$ is $0$.
\end{example} 
The above example works equally for any discrete space.
\begin{example}
If $G=\mathbb{Z}_p$ for $p$ any prime number, then $S\left(\mathbb{Z}_p\right)$ is homeomorphic to the space $P$ as defined in Example \ref{padic}. For more detail see the paragraph proceeding Definition 3.2 \cite{BarZp} and \cite{Gartside}. This space has Cantor-Bendixson dimension $2$ hence the category of sheaves of $\mathbb{Q}$-modules over $S\left(\mathbb{Z}_p\right)$ has injective dimension $1$ by Theorem \ref{ID_CB}. 
\end{example}
\begin{example}
Consider distinct primes $p_1,p_2,\ldots,p_n$. We have a profinite group $\underset{1\leq i\leq n}{\prod}\mathbb{Z}_{p_i}$ with corresponding profinite space $S\left(\underset{1\leq i\leq n}{\prod}\mathbb{Z}_{p_i}\right)$. This space is homeomorphic to $P^n$ by \cite[Proposition 2.5]{Gartside}. Then by Proposition \ref{prodcant} we have that $\dim_{CB}(P^n)=n+1$ and $\left(P^n\right)^{(n+1)}=\emptyset$. We can now apply Theorem \ref{ID_CB} to deduce that the injective dimension of sheaves over $S\left(\underset{1\leq i\leq n}{\prod}\mathbb{Z}_{p_i}\right)$ is exactly $n$. 
\end{example}
The following example demonstrates a particular case where the category of sheaves have infinite injective dimension.
\begin{example}
In Example \ref{Exinf} we observed that the space $\underset{n\in\mathbb{N}}{\coprod}P^n$ has infinite Cantor-Bendixson rank. Therefore an application of Theorem \ref{ID_CB_infty} shows that the injective dimension of sheaves of $\mathbb{Q}$-modules over this space is also infinite.
\end{example} 
\begin{remark}
If $G$ is a profinite group which is an inverse limit over a countable diagram then $SG$ is a second countable space. It follows from Theorem \ref{CantThm} that $SG$ is topologically the disjoint union of the scattered part of $SG$ and the perfect hull of $SG$. This in particular means that the injective dimension of sheaves over $SG$ is equal to the maximum of the injective dimension of sheaves of the scattered part of $SG$ and that of the perfect hull of $SG$.  
\end{remark}
Furthermore if we consider Conjecture \ref{Conject} we can see the possible implications.
\begin{example}
The profinite completion of $\mathbb{Z}$ is defined to be:
\begin{align*}
\hat{Z}=\underset{p}{\prod}\mathbb{Z}_p
\end{align*}
where the product runs over the collection of prime numbers $p$. This is a profinite group under the product topology and we can see that $S\left(\hat{\mathbb{Z}}\right)$ is perfect. If proven to be correct, Conjecture \ref{Conject} would imply that the injective dimension of sheaves over this space is infinite.
\end{example}
Another important example of a space is defined in \cite[Definition 2.8]{Gartside}, and this construction is similar to the Cantor space.
\begin{definition}\label{Pel}
Let $F_0=P_0=[0,1]$, the closed unit interval. We set $F_1=F_0\setminus (\frac{1}{3},\frac{2}{3})$ and $B_1=F_1\bigcup \left\lbrace\frac{1}{2}\right\rbrace$. That is, to form $F_1$ we remove the middle third of the interval of $F_0$ and to form $B_1$ we reinsert the midpoint of the deleted interval to $F_1$.

Given $F_{i-1}$ we define $F_i$ by deleting the middle third intervals of the remaining segments of $F_{i-1}$ and we define $B_i$ by reinserting midpoints of the deleted intervals to $F_i$. We set $F=\underset{n\in\mathbb{N}}{\bigcap}F_n$ and $B=\underset{n\in\mathbb{N}}{\bigcap}B_n$.
\end{definition}
The main focus of \cite{Gartside} is on proving that the algebraic structure of a profinite group $G$ can tell us about $SG$. Specifically, throughout \cite{Gartside} there are many assumptions on the algebraic structure of $G$ which lead to the conclusion that $SG$ is homeomorphic to $B$ from Definition \ref{Pel}. 
\begin{example}
Consider the spaces $B$ and $F$ defined in Definition \ref{Pel}. From \cite{Gartside} we know that the space $B$ has perfect hull given by the Cantor space $F$ and that $\rank_{CB}(B)=1$. If Conjecture \ref{Conject} were true then it would follow that the injective dimension of sheaves over $B$ is infinite.
\end{example}
\thanks{I would like to thank my PhD supervisor Dr David Barnes for his excellent guidance and support throughout many interesting discussions about this work. I would also like to thank Dr Martin Mathieu for his very helpful advice about this paper. Finally I would like to thank my family and friends for all of their moral support which has made this possible.}

\bibliography{refpap}
\bibliographystyle{alpha}
\end{document}